\newcommand*{\greek}[1]{%
  \expandafter\@greek\csname c@#1\endcsname
}
\newcommand*{\@greek}[1]{%
  $\ifcase#1\or\alpha\or\beta\or\gamma\or\delta\or\varepsilon
    \or\zeta\or\eta\or\theta\or\iota\or\kappa\or\lambda
    \or\mu\or\nu\or\xi\or o\or\pi\or\varrho\or\sigma
    \or\tau\or\upsilon\or\phi\or\chi\or\psi\or\omega
    \else\@ctrerr\fi$
}
\newlist{tightenum}{enumerate}{3}
\setlist[tightenum,1]{noitemsep,nosep,
                        label=\rm(\alph*),
                        ref  =\alph*}
\setlist[tightenum,2]{noitemsep,nosep,
                         label=\rm(\roman*),
                         ref  =\roman*}
\setlist[tightenum,3]{noitemsep,nosep,
                         label=\rm(\roman*),
                         ref  =\roman*}
\crefname{p}{}{}
\newenvironment{clmproof}{\begin{proof}[Proof of Claim:]}{\end{proof}}
\DeclareMathOperator{\len}{len}
\DeclareMathOperator{\dist}{dist}
\DeclareMathOperator{\polylog}{polylog}
\newcommand{\Oh}{\mathcal{O}}
\DeclarePairedDelimiter\set{\{}{\}}
\renewcommand{\mid}{:}  
\title{\MakeUppercase{{E}rdős–{P}ósa property of cycles that are far apart}}
\author{%
 Vida Dujmovi{\'c}\,\thanks{School of Computer Science and Electrical Engineering, University of Ottawa, Ottawa, Canada (\texttt{vida.dujmovic@uottawa.ca}). Research supported by NSERC and a University of Ottawa Research Chair.}
 \qquad
 Gwena\"el Joret\thanks{D\'epartement d'Informatique, Universit\'e libre de Bruxelles, Belgium ({\tt gwenael.joret@ulb.be}). G.\ Joret is supported by the Belgian National Fund for Scientific Research (FNRS).}
 \qquad
 Piotr Micek\thanks{Department of Theoretical Computer Science, Jagiellonian University, Kraków, Poland (\texttt{piotr.micek@uj.edu.pl}). Research supported by the National Science Center of Poland under grant UMO-2023/05/Y/ST6/00079 within the WEAVE-UNISONO program.}
 \qquad
 Pat Morin\thanks{School of Computer Science, Carleton University, Ottawa, Canada (\texttt{morin@scs.carleton.ca}). Research supported by NSERC.}%
}
\date{}
\begin{document}

\maketitle

\begin{abstract}
  We prove that there exist functions  $f,g:\mathbb{N}\to\mathbb{N}$ such that for all nonnegative integers $k$ and $d$,  for every graph $G$,  either $G$ contains $k$ cycles such that vertices of different cycles have distance greater than $d$ in $G$, or there exists a subset $X$ of vertices of $G$ with $|X|\leq f(k)$ such that  $G-B_G(X,g(d))$ is a forest, where $B_G(X,r)$ denotes the set of vertices of $G$ having distance at most $r$ from a vertex of $X$.
\end{abstract}

\section{Introduction}

In 1965, Erdős and Pósa~\cite{EP1965} showed that every graph $G$ either contains $k$ vertex-disjoint cycles or contains a set $X$ of $\Oh(k\log k)$ vertices such that $G-X$ has no cycles. This result has inspired an enormous amount of work on the so-called \emph{Erdős-Pósa property} of combinatorial objects. For example, using their Grid Minor Theorem, \citet{RS1986} proved the following generalization: For every planar graph $H$, there exists a function $f_H(k)$ such that every graph $G$ contains either $k$ vertex-disjoint subgraphs each having an $H$ minor, or a set $X$ of at most $f_H(k)$ vertices such that $G-X$ has no $H$ minor.

In a recent inspiring work, \citet{GP23} propose a new theme of research combining graph minors and coarse geometry. As part of this theme, they propose a number of conjectures including the Coarse Erdős-Pósa statement, see \cite[Conjecture 9.7]{GP23}. Our main theorem is the following natural variant of the Coarse Erdős-Pósa statement conjectured by Chudnovsky and Seymour.\footnote{The problem was discussed at the Barbados Graph Theory Workshop in March 2024 held at the Bellairs Research Institute of McGill University in Holetown.}

\begin{thm}\label{thm:main-in-intro}
  There exist functions $f,\, g:\mathbb{N}\to\mathbb{N}$ such that for all integers $k\ge 1$ and $d\ge 1$, for every graph $G$,  either $G$ contains $k$ cycles where the distance between each pair of cycles is greater than $d$, or  there exists a subset $X$ of vertices of $G$ with $|X|\leq f(k)$ such that  $G-B_G(X,g(d))$ is a forest.\footnote{Here, and throughout, $B_G(X,r)$ denotes the union of radius-$r$ balls in $G$ centered at the vertices in $X$.  Formal definitions appear in \cref{proof}.}
\end{thm}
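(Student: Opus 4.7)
The plan is induction on $k$. The base case $k=1$ is immediate: take $X=\emptyset$, since a graph with no cycle is already a forest.

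For the inductive step, the starting point is a greedy ball-growing process. Given $G,k,d$, build a sequence $C_1,C_2,\ldots$ by choosing, at step $i+1$, any cycle $C_{i+1}$ of $G-B_G(V(C_1)\cup\cdots\cup V(C_i),d)$. By construction each new cycle is at $G$-distance greater than $d$ from all earlier cycles, so the collection is pairwise at distance greater than $d$. Either we reach $k$ cycles (and we are done), or the process terminates after producing $m<k$ cycles with $G-B_G(Y,d)$ a forest, where $Y=V(C_1)\cup\cdots\cup V(C_m)$.

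The remaining challenge is to compress $Y$ to a set $X$ with $|X|\le f(k)$ while only inflating the radius from $d$ to $g(d)$. Crucially, $X$ need not cover $Y$ pointwise; it only needs to intersect every cycle of $G$ after inflation. Since every cycle of $G$ meets $B_G(V(C_i),d)$ for some $i$, it suffices to build, for each of the $m<k$ cycles $C_i$, a bounded-size local set $X_i$ whose $g(d)$-ball meets every cycle of $G$ that crosses $B_G(V(C_i),d)$. When $V(C_i)$ fits inside a single $G$-ball of radius $g(d)-d$, a single vertex $X_i=\{v\}$ suffices: any cycle approaching $C_i$ has a vertex within $d$ of $C_i$ and therefore within $g(d)$ of $v$.

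The main obstacle is the case of long cycles $C_i$ that no small $G$-ball contains. For these we plan a secondary argument along $C_i$: partition $C_i$ into consecutive arcs of length $\Theta(d)$ and examine the cycles of $G$ attached to each arc. If nontrivial cycle structure attaches to many arcs, one should be able to piece together $k$ cycles spread along $C_i$ that are pairwise at $G$-distance greater than $d$ -- either directly, exploiting near-isometry of $C_i$ in $G$, or after replacing $C_i$ by shorter shortcut cycles when it fails to be near-isometric -- contradicting the hypothesis. Otherwise only boundedly many arcs carry nontrivial structure, and applying the inductive hypothesis inside a suitably chosen localized subgraph around each such arc produces a small $X_i$. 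The delicate technical point is to keep distance measurements honest under localization: a local subgraph preserves distances only from below, so the local neighborhoods must be taken with slack of order $g(d)$ in order to trap every relevant cycle of $G$ and make the recursion close, and it is in pinning down exactly how much slack is required that the final forms of $f$ and $g$ will be determined.
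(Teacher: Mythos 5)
Your opening moves are fine: the greedy ball-growing does yield $m<k$ cycles $C_1,\ldots,C_m$ with every cycle of $G$ meeting $\bigcup_i B_G(V(C_i),d)$, and when some $C_i$ has small diameter a single center vertex suffices. But the remainder of the plan — the compression of a long $C_i$ — is where the entire difficulty of the theorem lives, and your treatment of it is not a proof but a wish list. Three concrete gaps.

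First, the induction on $k$ with localization cannot be made to close as stated, because the packing branch does not transfer. If $H=G[B_G(\cdot,R)]$ is your localized subgraph and the inductive hypothesis hands you $k-1$ cycles at $H$-distance greater than $d$, these may be at $G$-distance at most $d$: a shortest $G$-path between two cycles near the boundary of $H$ can exit $H$ and return cheaply, so $\dist_H\geq\dist_G$ is exactly the wrong inequality. You note that local subgraphs ``preserve distances only from below'' and propose slack of order $g(d)$, but slack only protects cycles that lie deep inside $H$; nothing in the inductive call forces the returned packing to be deep. Fixing this by shrinking the region at each recursion level forces the initial radius to grow with $k$, so $g$ would no longer depend only on $d$ — violating the form of the theorem you are trying to prove.

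Second, the arc-decomposition of a long $C_i$ is purely aspirational. The claim that ``nontrivial cycle structure attaching to many arcs'' yields $k$ far-apart cycles is exactly the hard combinatorial content, and your proposal offers no mechanism. In particular, many cycles can attach to $C_i$ through a single vertex (so no packing exists and a one-vertex hitting set is correct, but your arc count sees many ``nontrivial'' arcs); attachments can wander through the rest of $G$ and re-enter far along $C_i$; and $C_i$ need not be near-isometric, so arc-distance along $C_i$ says nothing about $G$-distance. You gesture at ``replacing $C_i$ by shorter shortcut cycles,'' which is indeed a real ingredient — the paper proves a lemma that near any cycle one finds either a $d$-unicyclic cycle or a short cycle — but that lemma is a nontrivial iterative construction, not a footnote, and your greedy $C_i$ have no such structure a priori.

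Third, and most fundamentally, your plan never confronts cycles that thread between \emph{several} of the regions $B_G(V(C_i),d)$ via the forest in between, or that re-enter the same region after a long excursion through the forest. These threading cycles are the crux: handling them is why the paper builds BFS-unicycle trees around each $d$-unicyclic $C_i$, encodes each threading cycle as a tuple of subtrees of a forest, and then invokes the Gyárfás–Lehel Helly-type theorem to either pack many disjoint tuples (which, via Simonovits's theorem on graphs of degree $2$ and $3$, yields $k$ far-apart cycles) or hit them all with a bounded set. None of this machinery, nor any substitute for it, appears in your proposal. The paper's proof is in fact \emph{not} an induction on $k$; it is a one-shot structural decomposition, and the ingredients you omit are precisely the ones that make $f$ depend only on $k$ and $g$ only on $d$.
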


We prove \cref{thm:main-in-intro} for $f(k)\in\Oh(k^{18}\polylog k)$ and $g(d)=19d$. 
The statement of \cref{thm:main-in-intro} was also posed as a conjecture by \citet{ahn.gollin:coarse}, who solved the cases $k=2$ and arbitrary $d$ as well as $d=1$ and arbitrary $k$.

\section{Tools}

Our proof makes use of two great tools. One is a key ingredient of Simonovits's~\cite{Simonovits67} proof of the Erdős and Pósa Theorem, which was published in 1967 just two years after the original paper. The other is a beautiful statement generalizing the Helly property of a family of subtrees in a fixed host tree, which is due to Gyárfás and Lehel and published in 1970.

For all positive integers $k$, define\footnote{Here and throughout, $\log x$ denotes the base-$2$ logarithm of $x$.}
\[
  \mathdefin{s(k)}:=
    \begin{cases}
      \lceil 4k(\log k + \log\log k +4)\rceil & \textrm{if $k\geq2$}\\
      2 & \textrm{if $k=1$.}
    \end{cases}
\]

\begin{thm}[\citet{Simonovits67}]\label{thm:simonovits}
  Let $k$ be a positive integer and let $G$ be a graph with all vertices of degree $2$ or $3$. If $G$ contains at least $s(k)$ vertices of degree $3$, then $G$ contains $k$ vertex-disjoint cycles.
\end{thm}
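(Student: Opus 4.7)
The plan is to reduce to cubic multigraphs and induct on $k$. First, suppress every degree-$2$ vertex of $G$ to obtain a multigraph $H$: its vertices are the degree-$3$ vertices of $G$, and its edges (loops and parallel edges allowed) are in bijection with the maximal paths of $G$ whose internal vertices all have degree $2$. Then $H$ is cubic, $|V(H)|$ equals the number of degree-$3$ vertices of $G$, and any collection of pairwise vertex-disjoint cycles in $H$ lifts to one in $G$. It therefore suffices to prove that every cubic multigraph with at least $s(k)$ vertices contains $k$ pairwise vertex-disjoint cycles.

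We argue by induction on $k$. The base case $k=1$ is immediate: a cubic multigraph on at least two vertices has $3n/2 \ge n$ edges, so some component has at least as many edges as vertices and therefore contains a cycle. For $k \ge 2$, set $n := |V(H)| \ge s(k)$. A standard BFS/Moore-type argument shows that any cubic multigraph on $n$ vertices has girth at most $2\log_2 n + O(1)$, so fix a shortest cycle $C \subseteq H$ with $\ell := |V(C)| = O(\log n)$. Since $C$ is shortest it has no chord, exactly $\ell$ edges leave $V(C)$, and the total degree deficiency $\sum_{v \in V(H_1)} (3 - \deg_{H_1}(v))$ of $H_1 := H - V(C)$ equals $\ell$.

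Now clean up $H_1$ to a cubic multigraph $H'$ by iteratively deleting vertices of degree at most $1$ and then suppressing vertices of degree $2$. A short edge-and-vertex counting argument — writing $|V(H')| = (n - \ell) - |T| - |S|$, where $T$ is the set deleted and $S$ is the set suppressed, and expressing $|T|+|S|$ in terms of $\ell$ and the edges removed — yields $|V(H')| \ge n - 2\ell = n - O(\log n)$. The form of $s$ is chosen precisely so that $s(k) - O(\log s(k)) \ge s(k-1)$, giving $|V(H')| \ge s(k-1)$. The inductive hypothesis then produces $k-1$ pairwise vertex-disjoint cycles in $H'$, which together with $C$ form $k$ pairwise vertex-disjoint cycles in $H$, and hence in $G$.

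The main thing to verify carefully is the cleanup inequality $|V(H')| \ge n - 2\ell$ combined with the arithmetic $s(k) - 2\ell \ge s(k-1)$ for $\ell \le 2\log_2 n + O(1)$; the remaining ingredients (the correspondence between cycles in $G$ and in $H$, the Moore-type girth bound, and the handling of loops and parallel edges, which can only decrease the girth) are standard.
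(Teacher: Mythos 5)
The paper states Simonovits's theorem with a citation to \citet{Simonovits67} and does not prove it, so there is no in-paper argument to compare against. Your reconstruction is the classical one and, judging from the form of $s(k)$, precisely the argument the function is calibrated for: reduce to a cubic multigraph $H$, extract a Moore-short cycle $C$ of length $\ell$, delete $V(C)$, clean up to a smaller cubic multigraph $H'$, and induct. The key lemmas do check out. For the cleanup inequality $|V(H')|\ge n-2\ell$, the cleanest route is a potential argument: set $\Phi := (\text{vertices removed so far}) + \sum_v (3-\deg v)$; then $\Phi$ decreases by $2$ when a degree-$0$ vertex is deleted and is unchanged when a degree-$1$ vertex is deleted or a degree-$2$ vertex is suppressed (including the degenerate loop cases), so the total number of removed vertices is at most the initial deficiency of $H_1$, which is at most $\ell$, giving $|V(H')|\ge (n-\ell)-\ell$. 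The Moore bound $\ell\le 2\log_2 n$ is indeed valid for multigraphs since loops and parallel edges only shorten girth. The arithmetic $s(k)-4\log_2 s(k)\ge s(k-1)$, which you rightly single out, does hold (the $\log k+\log\log k$ term in $s(k)$ exists exactly to absorb $4\log_2 s(k)\approx 4\log k+4\log\log k+O(1)$ while $s(k)-s(k-1)\approx 4\log k+4\log\log k+16$, leaving constant slack), but a complete proof must spell it out rather than appeal to $O(\cdot)$. Two small fixes: peel off any component of $G$ that is a cycle of degree-$2$ vertices before suppressing, since such a component escapes your path bijection (it is a free disjoint cycle, so this only helps); and ``exactly $\ell$ edges leave $V(C)$'' should read ``at most $\ell$'' --- a shortest cycle is chordless for $\ell\ge 3$, and for $\ell\le 2$ an extra edge inside $V(C)$ would reduce the count rather than add a chord --- which is the direction you need anyway.
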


If a graph has no vertices then it is a \defin{null} graph, otherwise it is a \defin{non-null} graph.

\begin{restatable}[\citet{gyarfas.lehel:helly}]{thm}{hungarians}\label{thm:gyarfas-lehel-general}
   There exists a function $\ell^\star:\N\times\N\to\N$ such that the following is true. For every $k,c\in\N$ with $k,c\geq1$, for every forest $F$ and
   every set $\mathcal{A}$ of
   non-null subgraphs of $F$, each having at most $c$ components, either
   \begin{tightenum}
     \item there are $k$ pairwise vertex-disjoint members of $\mathcal{A}$; or\label[p]{item:hungarians-pack}
     \item there exists $X \subseteq V(F)$ with $|X|\leq \ell^\star(k,c)$ and such that $X\cap V(A)\neq\emptyset$ for each $A\in\mathcal{A}$.\label[p]{item:hungarians-hit}
   \end{tightenum}
\end{restatable}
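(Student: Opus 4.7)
The plan is to prove the statement by induction on $c$, with the base case $c = 1$ handled by the classical Gyárfás–Lehel theorem for subtrees of a tree, which yields $\ell^\star(k, 1) = \Oh(k)$. For $c = 1$, every $A \in \mathcal{A}$ is a subtree of $F$, and the Helly property of subtrees (pairwise intersecting subtrees share a common vertex), combined with a greedy peeling argument on a maximum pairwise disjoint subfamily, produces a hitting set of size linear in $k$ whenever $\mathcal{A}$ has no $k$ pairwise disjoint members.

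For the inductive step, given $\mathcal{A}$ whose members each have at most $c$ components, I would pass to Steiner trees: for each $A \in \mathcal{A}$, let $R_A$ be the minimum connected subgraph of $F$ whose vertex set contains $V(A)$, working on each tree of $F$ separately. The family $\{R_A : A \in \mathcal{A}\}$ consists of subtrees of $F$, and I apply the base case to it with parameter $k$. If the outcome is $k$ pairwise disjoint Steiner trees, the corresponding $A$'s are pairwise disjoint and we are done. Otherwise we obtain a hitting set $X$ of size at most $\ell^\star(k, 1)$ meeting every $R_A$. For each $A$ not directly hit by $X$ (i.e.\ $V(A) \cap X = \emptyset$), the set $X$ meets $R_A$ only at Steiner vertices of $A$; by the minimality of $R_A$, removing these vertices splits $R_A$ into at least two pieces, each of which contains at least one vertex of $V(A)$, and therefore at most $c - 1$ components of $A$. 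This produces a new family of subgraphs of $F - X$, each with at most $c - 1$ components, on which we invoke the inductive hypothesis with a suitably inflated parameter.

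The hard part will be converting a ``pairwise disjoint pieces'' outcome of the inductive hypothesis into $k$ pairwise disjoint full members of $\mathcal{A}$: two disjoint pieces $A_1 \cap T$ and $A_2 \cap T'$ do not preclude some unchosen piece of $R_{A_1} - X$ from colliding with $A_2$. Overcoming this requires calling the inductive hypothesis with parameter at least $k$ times the maximum number of pieces per $A$ (itself bounded in terms of $|X|$ and hence of $k$) and then pigeonholing to select $k$ pieces whose host $A$'s are genuinely pairwise disjoint as subgraphs of $F$. Iterating this peeling $c$ times—one component per level—accounts for each component in turn, and I would expect the resulting $\ell^\star(k, c)$ to be polynomial in $k$ with degree depending on $c$, and possibly tower-like in $c$.
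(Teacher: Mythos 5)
Your Steiner-tree route is genuinely different from the paper's proof. The paper does not form a single Steiner tree per member; instead, for every pair $\{i,j\}\in\binom{[c]}{2}$ it joins the $i$-th and $j$-th component of each $A$ by a single $F$-path, applies induction to each of the $\binom{c}{2}$ resulting families with $\le c-1$ components, and unions the hitting sets into $X_0$. The payoff is structural: for every $A$ that survives (i.e.\ avoids $X_0$), \emph{all} components of $A$ land in \emph{distinct} components $F_1,\dots,F_m$ of $F-X_0$. The paper then applies a separate lemma (\cref{lem:hungarians-Fi-tuples}) about \emph{labeled} $(F_i)_{i\in[m]}$-tuples of subtrees. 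Crucially, once components are indexed by the forest they live in, independence of tuples is literally disjointness of the underlying $A$'s, so the packing outcome transfers with no pigeonhole needed.

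The gap in your argument is exactly the step you flagged as ``the hard part,'' and the proposed fix does not repair it. Pigeonholing on pairwise-disjoint pieces can force the $k$ selected pieces to come from $k$ \emph{distinct} hosts $A$, but distinctness is not disjointness. Concretely, let $F$ be a long path $v_1v_2\cdots v_n$, let $c=2$, and let $A_i:=\{v_2\}\cup\{v_{2i+1}\}$ (two isolated vertices) for $i=1,\dots,m$. The Steiner trees $R_{A_i}$ all pass through $v_3$, and taking $X=\{v_3\}$ hits every $R_{A_i}$ while missing $V(A_i)$ for $i\ge 2$. After removing $X$, the pieces coming from $A_2,\dots,A_m$ include the pairwise-disjoint singletons $\{v_5\},\{v_7\},\dots,\{v_{2m+1}\}$, one per host. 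So the inductive call returns a packing, the pigeonhole happily produces $k$ distinct hosts, yet any two of these $A_i$'s share $v_2$. No further pigeonholing on the quantities you have at hand (``$k$ times max pieces per $A$,'' ``$|X|$'') can distinguish disjoint hosts from merely distinct ones, because piece-disjointness carries no information about how hosts overlap \emph{outside} the chosen pieces. To make the packing step sound, you need a mechanism that aligns components across distinct members, which is exactly what the paper's pairwise-separation via $X_0$ and the labeled-tuple lemma provide.

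Two smaller issues, both fixable: if $A$ meets several trees of $F$, then $R_A$ is a forest, not a subtree, so you should root $F$ by a common apex first; and not every component of $R_A-X$ need contain a vertex of $V(A)$ (remove two non-adjacent internal vertices of a path and the middle segment has none), so you should restrict to the components that do.
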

Let $\ell^\star$ be the function given by~\cref{thm:gyarfas-lehel-general}. \citet{gyarfas.lehel:helly} give a complete proof of \cref{thm:gyarfas-lehel-general} when $F$ is a path and sketch the proof for the general case in which $F$ is a forest. For completeness, we give a proof in \cref{sec:hungarians}. We show that we can take $\ell^\star$ so that $\ell^\star(k,3)=\Oh(k^{18})$, which gives a bound of $f(k)\in\Oh(k^{18}\polylog k)$ in \cref{thm:main-in-intro}.

\section{Definitions and outline of the proof}
\label{sec:outline}

In this section we give an overview of the proof of~\cref{thm:main-in-intro}. 
We start with basic definitions. 
Let $G$ be a graph.  We use the notations $\mathdefin{V(G)}$ and $\mathdefin{E(G)}$ to denote the vertex set of $G$ and the edge set of $G$, respectively.  For a set $S$, $\mathdefin{G[S]}$ denotes the subgraph of $G$ induced by the vertices in $S\cap V(G)$, and $\mathdefin{G-S}:=G[V(G)\setminus S]$.  For a set $Z$ of edges of $G$, $G-Z$ denotes the subgraph of $G$ obtained by removing the edges in $Z$ from $G$.  For two subsets $A$ and $B$ of $V(G)$, we say that an edge $uv$ of $G$ with $u\in A$ and $v\in B$ is \defin{between} $A$ and $B$.

The \defin{length}, $\mathdefin{\len(P)}$, of a path $P$ is the number of edges in $P$. 
The \defin{distance} between two vertices $x$ and $y$ of $G$, denoted by $\mathdefin{\dist_G(x,y)}$, is the length of a shortest path in $G$ with endpoints $x$ and $y$ if there is one, or $\infty$ if there is none. For nonempty subsets $X$ and $Y$ of $V(G)$, define $\mathdefin{\dist_G(X,Y)}:=\min\{\dist_G(x,y):(x,y)\in X\times Y\}$. For an integer $r\ge 0$ and a vertex $x$ of $G$, the \defin{ball of radius $r$ around $x$} is $\mathdefin{B_G(x,r)}:=\{y\in V(G):\dist_G(x,y)\le r\}$.  For a subset $X$ of $V(G)$, $\mathdefin{B_G(X,r)}:=\bigcup_{x\in X}B_G(x,r)$.

Let $G$ be a graph and let $d$ be a nonnegative integer. A set $\mathcal{C}$ of cycles in $G$ is a \defin{$d$-packing} if $\dist_G(V(C),V(C'))> d$ for every two distinct $C,C'\in\mathcal{C}$.

A graph $G$ is \defin{unicyclic} if $G$ contains at most one cycle. 
(In particular, if a unicyclic graph contains a cycle then this cycle must be chordless.)
Let $r$ be a nonnegative integer and let $G$ be a graph. A cycle $C$ in $G$ is \defin{$r$-unicyclic in $G$} if $G[B_G(V(C),r)]$ is unicyclic. (Note that every induced cycle in $G$ is $0$-unicyclic.)

We continue with an overview of the proof of~\cref{thm:main-in-intro}. 
Let $k\geq1$, $d\geq1$, and let $G$ be a graph. 
Suppose that $G$ does not contain a $d$-packing of $k$ cycles (otherwise, we are done). 
The goal is to find a small number of balls (bounded in $k$) of small radii (bounded in $d$) that hit all the cycles in $G$.

First, we take a  maximal $2d$-packing $\mathcal{C}=\set{C_1,\ldots,C_p}$ of cycles that are $d$-unicyclic in $G$.  
Since $G$ has no $d$-packing of $k$ cycles, $p<k$. 
Next, in a simple iterative process, we build a $d$-packing 
$\mathcal{D}=\set{D_1,\ldots,D_q}$ of 'short' cycles in $G$, of length at most $6d+2$. 
Again, we have $q<k$. 
The key property of these two collections is that every cycle in $G$ contains a vertex 
in $B_G(\bigcup_{i\in[p]} V(C_i),4d) \cup B_G(\bigcup_{i\in[q]} V(D_i),4d)$. 
This is a consequence of~\cref{short_or_unicycle_nearby}, which states that for every cycle $C$ in $G$, either the ball of radius $2d$ around $C$ contains a $d$-unicyclic cycle or 
the ball of radius $3d$ around $C$ contains a short cycle. 

We choose one vertex from each cycle in $\mathcal{D}$ and let $X_0$ be the set of chosen vertices. 
Note that since all cycles in $\mathcal{D}$ are short, we have $B_G(\bigcup_{i\in[q]}V(D_i)) \subseteq B_G(X_0,7d+1)$.

Next, we aim to hit all the cycles in $G$ contained in $B_G(V(C_i),6d)$, for each $i\in[p]$. 
Fix $i\in[p]$ and fix a $C_i$-rooted spanning BFS-unicycle $U_i$ of $G_i:=G[B_G(V(C_i),6d)]$ (see next section for the definition). 
Each edge $e$ in $E(G_i)\setminus E(U_i)$ together with two paths in $U_i$ from the endpoints of $e$ to $V(C_i)$ induces a subgraph $P_e$ of $G$ that is a path or a 'lollipop' in $G_i$.  
One can prove that either there are many such subgraphs $P_e$ that are $d$-apart in $G$ or there is a subset of vertices $Z_i$ such that $B_G(Z_i,13d)$ hits every edge $e$ in $E(G_i)\setminus E(U_i)$. In the former case, 
this subfamily of paths $\set{P_e}$ together with cycles in $\mathcal{C}$ form a 'frame', a subgraph of $G$ where all vertices have degree $2$ or $3$, and with many vertices of degree $3$. One can then apply Simonovits's Theorem, see~\cref{thm:simonovits}, 
which gives a family of $k$ vertex-disjoint cycles in the frame. 
This packing of cycles in the frame translates into a $d$-packing of $k$ cycles in $G$, a contradiction.
Thus, we obtain the second outcome, that is, a subset $Z_i$ of vertices in $G$ such that $B_G(Z_i,13d)$ contains an endpoint of every edge $e$ in $E(G_i)\setminus E(U)$. 
This argument is encapsulated in~\cref{grow_unicycle}. 
We then let $X_1:=\bigcup_{i\in[p]} Z_i$. 

With a similar argument, 
which involves another application of Simonovits Theorem, and 
which is enclosed in~\cref{double_unicycle}, we construct a set $X_2\subseteq V(G)$  of bounded size such that all vertices of $G$ that lie in the intersection of at least two sets from $\set{B_G(V(C_i),6d)\mid i\in[p]}$ are contained in $B_G(X_2,13d)$. 

At this point, we are in the following situation: We defined bounded-size sets $X_0, X_1, X_2$, and we let $\widehat{Y}$ be the union of the balls centered on the vertices in these sets (with the appropriate bounded radius). 
After the removal of $\widehat{Y}$, the graph $G$ has no cycle that is at distance more than $4d$ from all cycles in $\set{C_1,\ldots,C_p}$. Thus, the graph 
\[
F_0 := \textstyle{G-\left(\widehat{Y}\cup \bigcup_{i\in[p]} B_G(V(C_i),4d)\right)}
\]
is a forest. 
Similarly, for each $i\in[p]$, we made sure that every cycle in the $6d$-ball around $C_i$ is hit by $\widehat{Y}$, thus 
\[
F_i := G[B_G(V(C_i),6d)]-\widehat{Y}
\]    
is a forest as well. 
Now, every cycle in $G$ that avoids $\widehat{Y}$ will necessarily intersect at least two of the forests $F_0, F_1, \dots, F_p$ (possibly many of them). 
Moreover, every such cycle $C$ must intersect $F_0$ (which we think of as the "forest outside"), thanks to the properties of $\widehat{Y}$, namely, $\widehat{Y}$ contains all vertices of $G$ that are contained in the intersection of two $6d$-balls around two cycles in $\set{C_1,\ldots,C_p}$. In other words, if our cycle $C$ enters the $6d$-ball around some cycle $C_i$, then it has to exit it via $F_0$. 
This naturally leads us to consider the 'outside segments' of $C$, which are (roughly) the parts of $C$ contained in $F_0$, and the 'inside segments' of $C$ that belong to some $F_i$ with $i\in [p]$. 

The final step of the proof, which is perhaps the most technical one, is to consider the $d$-balls around all the segments defined by all cycles of $G$ that avoid $\widehat{Y}$, and use them in an application of \cref{thm:gyarfas-lehel-general}---a powerful generalization of the Helly property of subtrees of a tree---using crucially that the number of 'dimensions' in our application is bounded: one for each of the forests $F_0, F_1, \dots, F_p$. 
The latter result either gives a bounded number of vertices hitting the $d$-balls of all these segments, which we use as centers of balls to hit the remaining cycles in $G$. 
Or it gives a large number of segments with pairwise disjoint $d$-balls. 
In the latter case, we use these to find $k$ cycles pairwise at distance more than $d$ using an application of Simonovits Theorem again. 

We remark that the second part of the above outline, once $\widehat{Y}$ has been built, is a simplified view of the actual proof that does not mention various subtleties but it is good enough to get a first intuitive picture of the proof.

\section{The Proof}
\label{proof}

\begin{lem}\label{short_or_unicycle_nearby}
  Let $r$ be a nonnegative integer, let $G$ be a graph, and let $C$ be a cycle in $G$. Then either
  \begin{tightenum}
    \item $G$ has an $r$-unicyclic cycle $C'$ with $V(C')\subseteq B_G(V(C),2r)$,  or\label{short_or_unicycle_nearby:unicyclic}
    \item $G$ has a cycle $C'$ of length at most $6r+2$ with $V(C')\subseteq B_G(V(C),3r)$.\label{short_or_unicycle_nearby:short}
  \end{tightenum}
\end{lem}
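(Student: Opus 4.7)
The plan is to let $C^*$ be a shortest cycle in $G[B_G(V(C),2r)]$, which exists since $C$ itself has $V(C)\subseteq B_G(V(C),2r)$. If $|C^*|\le 6r+2$, case \cref{short_or_unicycle_nearby:short} is immediate with $C':=C^*$, because $V(C^*)\subseteq B_G(V(C),2r)\subseteq B_G(V(C),3r)$. Otherwise $|C^*|>6r+2$, and I claim $C^*$ is $r$-unicyclic, so \cref{short_or_unicycle_nearby:unicyclic} holds with $C':=C^*$. A first easy observation is that $C^*$ must be chordless in $G$: any chord would yield a strictly shorter cycle whose vertex set lies in $V(C^*)\subseteq B_G(V(C),2r)$, contradicting minimality of $C^*$.

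For the $r$-unicyclic claim I suppose for contradiction that $G[B_G(V(C^*),r)]$ is not unicyclic. Build a BFS spanning forest $T$ of $G[B_G(V(C^*),r)]$ rooted at the super-source $V(C^*)$ and containing a Hamilton path of $C^*$, so that $C^*$ corresponds to a single non-tree edge $e_{C^*}$. By the cyclomatic inequality there is another non-tree edge $e'=xy$, with BFS levels $\ell_x,\ell_y\le r$ and BFS anchors $p,q\in V(C^*)$. The fundamental cycle of $e'$ in $T$ and the cycle obtained by swapping the tree-arc from $p$ to $q$ for the complementary arc of $C^*$ (via $e_{C^*}$) have lengths summing to at most $2(\ell_x+\ell_y)+|C^*|+2\le 4r+|C^*|+2$, so the shorter of the two, call it $C^{**}$, satisfies $|C^{**}|\le 2r+|C^*|/2+1<|C^*|$ since $|C^*|>4r+2$. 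Moreover, if $x,y$ happen to share a BFS anchor in $V(C^*)$, then the fundamental cycle alone already has length $\le \ell_x+\ell_y+1\le 2r+1\le 6r+2$ with vertex set in $B_G(V(C^*),r)\subseteq B_G(V(C),3r)$, which directly yields case \cref{short_or_unicycle_nearby:short}.

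The main obstacle is the remaining cross-anchor case: $V(C^{**})$ sits in $B_G(V(C^*),r)\subseteq B_G(V(C),3r)$, but some stub vertices of $C^{**}$ may escape $B_G(V(C),2r)$, so $C^{**}$ does not immediately contradict the minimality of $C^*$. The finish has to either refine the choice of $e'$ (or of $C^*$) so as to force $V(C^{**})\subseteq B_G(V(C),2r)$, producing the desired contradiction and establishing \cref{short_or_unicycle_nearby:unicyclic}, or else exploit the bound $|C^{**}|\le 2r+|C^*|/2+1$ together with a short iteration of the same shortening construction inside $B_G(V(C),3r)$ to obtain some cycle of length at most $6r+2$ with vertex set in $B_G(V(C),3r)$, giving case \cref{short_or_unicycle_nearby:short}. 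I expect this containment bookkeeping, rather than the fundamental-cycle length inequality, to be the delicate heart of the proof.
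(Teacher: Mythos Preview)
Your diagnosis is accurate: the containment bookkeeping is the whole difficulty, and neither of your suggested fixes closes it. Refining the choice of $e'$ cannot help in general, since $C^*$ may have vertices at distance exactly $2r$ from $C$, and then \emph{every} non-tree edge anchored near such a vertex can force $C^{**}$ to use vertices at distance strictly greater than $2r$ from $C$; there is no ``good'' $e'$ to pick. Iterating the shortening inside $B_G(V(C),3r)$ fails for the same reason one level up: to shorten $C^{**}$ you would need to look at $B_G(V(C^{**}),r)\subseteq B_G(V(C),4r)$, so the region drifts outward by $r$ at every step while the length only halves, and you never land a short cycle inside $B_G(V(C),3r)$. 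The minimality of $C^*$ gives you nothing once you have left $B_G(V(C),2r)$.

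The paper avoids this drift by never letting go of $C$. Instead of taking a global shortest cycle, it maintains a sequence of cycles $C_i$ each of the form $Q_i\cup P_i$ where $Q_i$ is an arc of the \emph{original} cycle $C$ and $P_i$ is a path of length at most $4r+1$ with both endpoints on $C$; this invariant immediately gives $V(C_i)\subseteq B_G(V(C),2r)$ at every step. If $C_i$ is not $r$-unicyclic, a single ear $P$ of length at most $2r+1$ into $B_G(V(C_i),r)$ either creates a short cycle with $P_i$ (case (b)) or, by a simple case analysis on where the endpoints of $P$ land, yields $C_{i+1}$ of the same form with $|Q_{i+1}|<|Q_i|$. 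The shrinking arc $Q_i$ is the progress measure, and the anchoring to $C$ is exactly what your approach lacks.
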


\begin{proof}
  Let $C_0:=C$.
  We construct inductively a sequence of pairs $(C_i,Q_i)_{i\geq0}$ such that
  \begin{enumerate*}[label=\rm(\arabic*)]
    \item $C_i$ is a cycle in $G$;
    \item $Q_i\subseteq C_i\cap C$, $Q_i$ is a path and contains at least one edge; and
    \item $C_i$ contains at most $4r+1$ edges not in $Q_i$.
  \end{enumerate*}
  Note that the first two conditions imply that there is a path $P_i\subseteq C_i$  such that $P_i$ and $Q_i$ are edge-disjoint and $C_i=P_i\cup Q_i$. Thus, $P_i$ is a path of length at most $4r+1$ with both endpoints in $C$ which implies that all vertices of $P_i$ are at distance at most $2r$ from $C$ in $G$. In particular, $V(C_i)\subseteq B_G(V(C),2r)$.

  Let $i\geq0$ and suppose that we already have defined $(C_i,Q_i)$. If $B_G(V(C_i),r)$ is unicyclic, then $C_i$ witnesses~\eqref{short_or_unicycle_nearby:unicyclic}, and we are done.

  Now suppose that $G[B_G(V(C_i),r)]$ contains a cycle $D$ distinct from $C_i$.  If $C_i$ has a chord $xy$, then we define a path $P:=xy$.  Otherwise, we construct a path $P$ as follows. Since $C_i$ has no chord, $D$ contains a vertex not in $C_i$.  Let $u$ be a vertex of $D$ that is at maximum distance from $C_i$ in $G$ among all vertices in $D$. Let $P(u)$ be a shortest path in $G$ from $V(C_i)$ to $u$. Since $u$ is adjacent to two vertices in $D$ there is a neighbor $v$ of $u$ in $D$ that is not in $P(u)$. Let $P(v)$ be a shortest path from $V(C_i)$ to $v$ in $G$. Note that both $P(u)$ and $P(v)$ have length at most $r$. By the choice of $u$, the path $P(v)$ does not contain $u$. See Figure~\ref{fig:from-Ci-to-Ci+1}. If $P(u)\cup P(v)\cup\{uv\}$ contains a cycle, say $E$, then $E$ has length at most $2r+1$ and $V(E)\subseteq B_G(V(C_i),r)\subseteq B_G(V(C),3r)$, so $E$ witnesses~\eqref{short_or_unicycle_nearby:short}. Thus, we assume that $P:=P(u)\cup P(v)\cup\{uv\}$ contains no cycle, so $P$ must be a path.
  \begin{figure}[t]
  \begin{center}
  \includegraphics{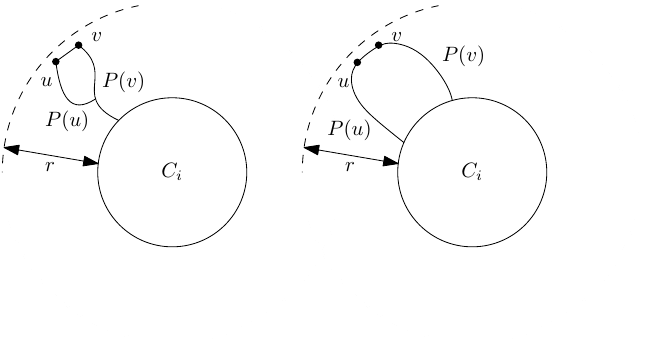}
  \end{center}
  \caption{Proof of~\cref{short_or_unicycle_nearby}: on the left $P(u)\cup P(v)\cup\set{uv}$ contains a cycle; on the right $P(u)\cup P(v)\cup\set{uv}$ is a path with both ends in $V(C_i)$.}
  \label{fig:from-Ci-to-Ci+1}
\end{figure}

  In each case, $P$ is a path with both endpoints in $C_i$ and no internal vertices in $C_i$ and $\len(P)\le 2r+1$.  Now there are three possibilities, depending on where the endpoints of $P$ are located:
  \begin{enumerate}[noitemsep,nosep]
    \item Both endpoints of $P$ are in $V(P_{i})$. In this case, $P\cup P_{i}$ contains a cycle of length at most $(2r+1)+(4r+1)=6r+2$ and this cycle is contained in $B_G(V(C_i),r)\subseteq B_G(V(C),3r)$, so it satisfies~\eqref{short_or_unicycle_nearby:short}.
    \item Both endpoints of $P$ are in $V(Q_{i})\setminus V(P_{i})$. In this case, we take $C_{i+1}$ to be the cycle in $Q_{i}\cup P$ and we take $Q_{i+1}:= Q_i\cap C_{i+1}$.  This works because $P$ has two distinct endpoints in $Q_{i+1}$ and therefore $Q_{i+1}$ contains at least one edge, $Q_{i+1}\subseteq Q_i\subseteq C$, and $\len(P_{i+1})\leq 2r+1$ since $P_{i+1}=P$ in this case. Furthermore, $\len(Q_{i+1}) < \len(Q_{i})$ because $C_{i+1}$ does not contain either endpoint of $Q_{i}$.
    \item Exactly one endpoint of $P$ is in $V(P_{i})$. In this case, $C_{i}\cup P$ has two cycles that each contain $P$. Each edge of $P_{i}$ belongs to exactly one of these two cycles. Therefore one of these cycles uses at most $\lfloor\frac{4r+1}{2}\rfloor=2r$ edges of $P_{i}$. We take $C_{i+1}$ to be this cycle and define $Q_{i+1}=C_{i+1}\cap Q_i$. 
    Note that $\len(Q_{i+1}) < \len(Q_{i})$ because $C_{i+1}$ does not contain one of the endpoints of $Q_{i}$.
  \end{enumerate}
  This process eventually produces the desired cycle $C'$ from the statement since, at each step in the process $\len(Q_i)$ decreases.
\end{proof}

The following definition, which is a unicyclic analogue to a breadth-first spanning tree, plays a central role in our proofs. Given a cycle $C$ in a connected graph $G$, a \defin{$C$-rooted spanning BFS-unicycle} of $G$ is a spanning connected unicyclic subgraph $U$ of $G$ with $E(U)\supseteq E(C)$ and such that $\dist_U(v,V(C))=\dist_G(v,V(C))$ for each $v\in V(G)$.  For each vertex $v$ in such a subgraph $U$, $v$ is a \defin{$U$-descendant} of every vertex in the unique shortest path in $U$ from $v$ to $V(C)$.

\begin{lem}\label{all_the_ys}
  Let $d,r,k$ be integers such that $1\leq d \leq r$ and $k\geq1$. Let $G$ be a graph and let $\mathcal{C}$ be a $2d$-packing of $d$-unicyclic cycles in $G$. Then either:
  \begin{tightenum}
    \item\label[p]{atys:packing} $G$ has a $d$-packing of $k$ cycles; or

    \item there exist $X,Y\subseteq V(G)$ such that
    \begin{tightenum}
      \item\label[p]{atys_unicycles} for each $C\in\mathcal{C}$ there exists a $C$-rooted spanning BFS-unicycle $U$ of $G[B_G(V(C),r)]$ such that $Y$ contains at least one endpoint of each edge in $G[B_G(V(C),r)]- E(U)$;
      \item\label[p]{atys_intersections} for each $\set{C,C'}\in\binom{\mathcal{C}}{2}$,  $B_{G}(V(C),r)\cap B_{G}(V(C'),r)\subseteq Y$; and 
      \item\label[p]{atys_balls} $|X|< 2k^2+(\binom{k}{2}+k)\cdot s(k)$ and $Y\subseteq B_G(X,2r+d)$.
    \end{tightenum}
  \end{tightenum}
\end{lem}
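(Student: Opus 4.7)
The plan is to assume that conclusion~(a) fails and then construct $X,Y\subseteq V(G)$ witnessing~(b). Since $\mathcal{C}$ is a $2d$-packing and hence a $d$-packing, we may assume $|\mathcal{C}|<k$, otherwise (a) already holds. For each $C\in\mathcal{C}$ I fix a $C$-rooted BFS-spanning unicycle $U_C$ of $G[B_G(V(C),r)]$, which exists since we can run BFS from $V(C)$ in $G[B_G(V(C),r)]$ and add back the edges of $C$. I then define $Y$ by including: for each $C\in\mathcal{C}$, one endpoint of every edge of $G[B_G(V(C),r)]-E(U_C)$, chosen to be the endpoint farther from $V(C)$ (which by the $d$-unicyclic hypothesis lies outside $B_G(V(C),d)$); and for each pair $\{C,C'\}\in\binom{\mathcal{C}}{2}$, all of $B_G(V(C),r)\cap B_G(V(C'),r)$ together with one endpoint of every edge of $G$ between the two balls. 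Conditions~(i) and~(ii) then hold by construction, and the task reduces to exhibiting $X$ with $|X|<2k^2+(\binom{k}{2}+k)\cdot s(k)$ and $Y\subseteq B_G(X,2r+d)$.

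For the single-ball contribution to $X$, fix $C\in\mathcal{C}$ and to every non-tree edge $e\in G[B_G(V(C),r)]-E(U_C)$ associate an \emph{anchor} $a_e\in V(C)$ at minimum $G$-distance (at most $r$) from the endpoint of $e$ placed in $Y$. Cluster these anchors under the equivalence relation ``within $G$-distance $r+d$''. If there are at most $s(k)$ clusters, add one anchor per cluster to $X$: every relevant $Y$-vertex lies within $r$ of its anchor, which lies within $r+d$ of a cluster representative, and so within $2r+d$ of $X$. Otherwise there are more than $s(k)$ clusters; taking one representative non-tree edge per cluster, contracting maximal degree-$2$ paths in the union of $U_C$ with these edges, iteratively trimming leaves, and performing a standard degree-reduction yields a graph of maximum degree $3$ with more than $s(k)$ degree-$3$ vertices. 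Applying~\cref{thm:simonovits} produces $k$ vertex-disjoint cycles that can be arranged, one per cluster, so that they are pairwise at $G$-distance greater than $d$ (clusters are separated by more than $r+d$ in $G$, while each such cycle stays within $G$-distance $r$ of its cluster). This gives a $d$-packing of $k$ cycles in $G$, contradicting the failure of~(a).

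The pair contribution proceeds analogously. For each $\{C,C'\}\in\binom{\mathcal{C}}{2}$ I place into $X$ up to four pilot vertices (the two closest points between $V(C)$ and $V(C')$ on each of $V(C)$ and $V(C')$, giving the $4\binom{|\mathcal{C}|}{2}<2k^2$ pilot term) and then cluster the bad objects for the pair (intersection vertices and chosen endpoints of inter-ball edges) by $G$-distance to these pilots. Either at most $s(k)$ clusters arise for the pair (adding a representative per cluster to $X$), or more do, in which case a Simonovits-style argument applied to an auxiliary graph consisting of parts of $U_C$ and $U_{C'}$ joined by bridges between the balls yields $k$ vertex-disjoint ``bridge cycles'' that, again by cluster spread, are pairwise at $G$-distance greater than $d$ and contradict the failure of~(a).

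Combining the contributions, $|X|\le 4\binom{|\mathcal{C}|}{2}+|\mathcal{C}|\cdot s(k)+\binom{|\mathcal{C}|}{2}\cdot s(k)<2k^2+(\binom{k}{2}+k)\cdot s(k)$, and the ball coverage $Y\subseteq B_G(X,2r+d)$ is inherited from the single- and pair-ball analyses. The main obstacle I anticipate is arranging the Simonovits application so that the $k$ vertex-disjoint cycles it produces are not merely disjoint but pairwise at $G$-distance greater than $d$; the $d$-unicyclic hypothesis is what enables this, as it forbids short unaccounted-for connections near each $V(C)$, so that proximity in the auxiliary degree-$3$ graph really does translate back to proximity in $G$.
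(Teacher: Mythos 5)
Your high-level plan (assume (a) fails, fix BFS-unicycles $U_C$, build $Y$ by construction, then show $Y$ sits inside balls around a small $X$ via a clustering-or-Simonovits dichotomy) is in the right spirit, but the concrete argument has several genuine gaps, and it departs from the paper's route without supplying the machinery the paper uses to make that route work. The paper does not prove \cref{all_the_ys} directly: it first proves two technical lemmas, \cref{grow_unicycle} (one ball) and \cref{double_unicycle} (a pair of balls), and \cref{all_the_ys} is then a one-paragraph union argument. Those two lemmas cluster non-tree edges (resp.\ pair objects) by the distance between the associated paths $P_e$ (resp.\ $P_x$), not by the distance between anchor vertices on $V(C)$, and that choice is what makes the Simonovits step survive.

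The main defect is the packing half of your single-ball dichotomy. You write that Simonovits ``produces $k$ vertex-disjoint cycles that can be arranged, one per cluster, so that they are pairwise at $G$-distance greater than $d$ (clusters are separated by more than $r+d$ in $G$, while each such cycle stays within $G$-distance $r$ of its cluster).'' This localization claim is false. The auxiliary degree-$\le 3$ graph is obtained from $U_C$ together with one representative non-tree edge per cluster, and the cycles that \cref{thm:simonovits} returns in that graph are free to traverse long $U_C$-paths across the whole ball and to use several representative edges at once; nothing confines a cycle to a single cluster, and nothing bounds its diameter by $r$. The paper avoids this by building the auxiliary graph $G'=C\cup\bigcup_{e\in I\setminus J}P_e$ from the cycle $C$ together with the $P_e$ paths of a maximal independent set (in the graph where $e\sim e'$ iff $\dist_G(V(P_e),V(P_{e'}))\le d$), so that the degree-$3$ vertices lie on $C$ and the rest is a vertex-disjoint union of short arcs; even then, proving that the $k$ Simonovits cycles form a $d$-packing requires a delicate case analysis that leans on the $d$-unicyclic hypothesis to control how a hypothetical short connecting path could re-enter $B_G(V(C),d)$. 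Your anchor-distance clustering does not interface with any of this: two non-tree edges whose anchors on $V(C)$ are far apart can still have close $P_e$'s, and vice versa, so neither direction of your dichotomy is forced. (Also, ``within $G$-distance $r+d$'' is not an equivalence relation, so ``clusters'' need to be replaced by a maximal independent set in an auxiliary graph, as the paper does; this is fixable but currently a gap.)

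The pair case has the same problem in amplified form, plus an additional missing idea. Your ``pilot vertices'' (closest points between $V(C)$ and $V(C')$) are not the right objects, and you do not address the phenomenon that the paper's \cref{double_unicycle} must handle: a vertex $y$ in the ball intersection or an endpoint of an inter-ball edge may already be a $U_i$-descendant of some vertex in $Y_i$ (the single-ball hitting set), in which case it is covered not by any pair-specific $X$ but by the single-ball $X_i$ \emph{because} \cref{grow_unicycle} promises that $B_G(X_i,2r+d)$ contains all $U_i$-descendants of $Y_i$ vertices (not just $Y_i$ itself). Your proof never tracks descendants, so your pair-case bookkeeping cannot close: when you try to cover an object $y$ whose natural witness path runs through a region of $U_i$ that was pruned by $Y_i$, you have neither the descendant guarantee to fall back on nor a clean path to cluster. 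The ``interesting vs.\ uninteresting'' split in \cref{double_unicycle} is precisely the device that makes this work, and it has no analogue in your sketch. In short: the hitting-set side of your argument can likely be repaired, but the packing side (both for single balls and pairs) relies on a localization claim about Simonovits cycles that is not true, and the pair case is missing the descendant-tracking idea on which the paper's proof hinges.
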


\Cref{all_the_ys} is an easy consequence of the next two technical lemmas, \cref{grow_unicycle,double_unicycle}, that address \cref{atys_unicycles} and \cref{atys_intersections} somewhat separately.  Thus, we defer the proof of \cref{all_the_ys} until after the proofs of \cref{grow_unicycle,double_unicycle}.

\begin{lem}\label{grow_unicycle}
  Let $d,r,k$ be integers such that $r\geq d \geq 1$ and $k\geq1$. Let $G$ be a graph and let $C$ be a $d$-unicyclic cycle in $G$. Let $U$ be a $C$-rooted spanning BFS-unicycle of $G_r:=G[B_G(V(C),r)]$.
  Then either:
  \begin{tightenum}
    \item\label[p]{grow_unicycle:item:packing} $G$ has a $d$-packing of $k$ cycles; or
    \item\label[p]{grow_unicycle:item:hitting} there exists $Y\subseteq V(G)$ that contains at least one endpoint of each edge in $E(G_r)\setminus E(U)$ and  there exists $X\subseteq V(G)$ with $|X|<2k+s(k)$ such that      $B_G(X,2r+d)$ contains all $U$-descendants of vertices in $Y$.
  \end{tightenum}
\end{lem}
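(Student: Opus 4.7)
The plan is to assume $G$ contains no $d$-packing of $k$ cycles (else outcome~(a) holds) and construct $X \subseteq V(G)$ and $Y \subseteq V(G)$ satisfying outcome~(b).

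The choice of $Y$ is forced by the $d$-unicyclic property of $C$. For every extra edge $e = uv \in E(G_r) \setminus E(U)$, at least one endpoint lies at $G$-distance strictly greater than $d$ from $V(C)$: otherwise, by the BFS property of $U$ together with $E(U) \supseteq E(C)$, the unique $U$-paths from $u$ and $v$ to $V(C)$ both lie in $B_G(V(C),d)$, and together with $e$ they close a cycle distinct from $C$ inside the unicyclic graph $G[B_G(V(C),d)]$, a contradiction. Pick a deep endpoint $y_e$ of each such $e$ and set $Y := \{y_e : e \in E(G_r) \setminus E(U)\}$. This $Y$ meets every extra edge.

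For the bookkeeping of descendants, every $U$-descendant $w$ of a vertex $y \in Y$ satisfies $\dist_G(w,y) \leq \dist_U(w,y) \leq r - \mathrm{depth}_U(y) \leq r$, so the set of all $U$-descendants of $Y$ is contained in $B_G(Y,r)$. Hence it suffices to exhibit $X$ with $|X| < 2k + s(k)$ such that $Y \subseteq B_G(X, r+d)$; then $B_G(X, 2r+d)$ automatically contains those descendants.

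To construct such an $X$, I would combine a greedy packing phase with an application of \cref{thm:simonovits}. In the greedy phase, iteratively extract cycles $F_1, F_2, \ldots$ from $G_r$ (each using some extra edge) at pairwise $G$-distance greater than $d$, stopping as soon as either $k$ cycles are obtained (contradicting our assumption) or no further cycle can be found. Each selected $F_i$ contributes two representative vertices to $X$, yielding strictly fewer than $2k$ vertices from this phase. In the second phase, the remaining extras cluster near the already-chosen cycles, and their structural contribution is captured by an auxiliary graph built from the sub-unicycle $U' \subseteq U$ spanned by $V(C)$ together with the paths from each $y \in Y$ to $V(C)$, augmented with the extras and with degree-2 vertices suppressed. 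Applying \cref{thm:simonovits} to (a $(2,3)$-regular refinement of) this auxiliary graph either produces $k$ vertex-disjoint cycles that lift to a $d$-packing in $G$ (again contradicting the assumption) or yields a hitting set of size at most $s(k)$ for the cycles of the auxiliary graph. Adjoining this hitting set to $X$ gives the desired bound $|X| < 2k + s(k)$ and the inclusion $Y \subseteq B_G(X, r+d)$.

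The main obstacle is the \emph{lifting step}: vertex-disjoint cycles in the auxiliary Simonovits graph must correspond to cycles in $G$ that are pairwise at $G$-distance greater than $d$, even though $G$ may have shortcuts invisible in $G_r$. I expect this to be the technical heart of the proof: any short $G$-shortcut between two ostensibly separated cycles, combined with the BFS property of $U$, should force an additional cycle inside $B_G(V(C),d)$, contradicting the $d$-unicyclicity of $C$. Making this precise—and thereby justifying that the packings produced in both phases are $d$-packings in $G$ rather than merely vertex-disjoint in $G_r$—is the delicate step that drives the argument.
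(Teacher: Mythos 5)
Your opening observation---that every edge $e\in E(G_r)\setminus E(U)$ has an endpoint at $G$-distance $>d$ from $V(C)$, because otherwise the $U$-paths from both endpoints together with $e$ would close a second cycle inside $G[B_G(V(C),d)]$---is correct and is a genuine structural fact that the paper does not use. However, hardwiring $Y$ to be the set of deep endpoints overconstrains the problem in a way that breaks the distance bookkeeping. The paper's $Y$ is chosen dynamically: after finding $e'=u'v'$ in a dominating set with $\dist_G(V(P_e),V(P_{e'}))\le d$, one only learns that $B_G(\{u',v'\},r+d)$ meets $V(P_e)$, and whichever side of $P_e$ the meeting vertex $x$ lies on dictates which endpoint of $e$ goes into $Y$. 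If $x$ lies on the $U$-path from the \emph{shallow} endpoint, the deep endpoint $y_e$ can be as far as $\approx 2r+d$ from $\{u',v'\}$, so your target $Y\subseteq B_G(X,r+d)$ is unattainable and the descendant set would only land in $B_G(X,3r+d)$, not $B_G(X,2r+d)$. You would need to drop the pre-commitment to deep endpoints and pick $y_e$ according to where $x$ lands, at which point your argument essentially collapses into the paper's.

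The larger gap is in the construction of $X$. You do not actually build the object that makes the Simonovits step sound. The paper first forms an auxiliary graph $H$ on $E(G_r)\setminus E(U)$ where $e\sim e'$ iff $\dist_G(V(P_e),V(P_{e'}))\le d$, and takes a \emph{maximal independent set} $I$ of $H$. Independence guarantees that the paths $P_e$ for $e\in I$ are pairwise at distance $>d$, which is exactly what makes vertex-disjoint cycles in the Simonovits graph $C\cup\bigcup_{e\in I\setminus J}P_e$ lift to a $d$-packing (after a careful case analysis using $d$-unicyclicity near $C$); maximality then makes $I$ a dominating set, which is what lets a small $X$ (the endpoints of $I$) capture every remaining $P_e$. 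Your auxiliary graph is built from the paths to \emph{all} of $Y$, and these are not pairwise far apart, so Simonovits may return vertex-disjoint cycles that are mutually close in $G$; your proposed fix (``a short shortcut forces a cycle in $B_G(V(C),d)$'') fails when both cycles and the shortcut lie near the boundary of $B_G(V(C),r)$, far from $C$. Relatedly, \cref{thm:simonovits} is a one-way implication and never ``yields a hitting set of size at most $s(k)$''; the hitting-set bound in the paper comes from $|I|<k+\tfrac12 s(k)$ in the case where the packing does not materialize. Without the maximal-independent-set construction, neither the size bound $|X|<2k+s(k)$ nor the lift of vertex-disjoint cycles to a $d$-packing is justified, so the proposal has a genuine gap at its technical core.
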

  \begin{figure}[t]
  \begin{center}
  \includegraphics{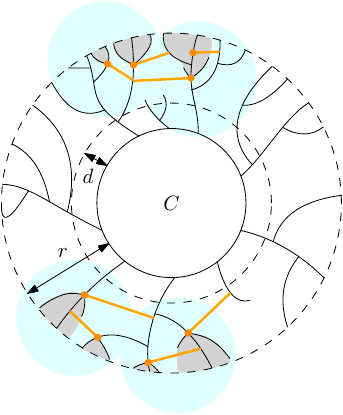}
  \end{center}
  \caption{Illustration of outcome \cref{grow_unicycle:item:hitting} of~\cref{grow_unicycle}: 
  $C$ is a $d$-unicyclic cycle in $G$. 
  $U$ is a $C$-rooted spanning BFS-unicycle of $G_r=G[B_G(V(C),r)]$. 
  The edges of $E(G_r)\setminus E(U)$ are highlighted, and one of endpoint of each such edge is highlighted. The set of these highlighted endpoints is $Y$. 
  Finally, $X$ is the set of centers of a small number of balls (four in this figure) whose union contains all $U$-descendants of vertices in $Y$.}
  \label{fig:killing-edges-outside-the-unicycle}
\end{figure}

\begin{proof}
  Let $e_0$ be an arbitrary edge of $C$ and observe that $U-\{e_0\}$ is a tree.  For each edge $e\in E(G_{r})\setminus E(U)$,  define $C_e$ to be the unique cycle in $U\cup \set{e}$ that does not contain $e_0$ and define $P_{e}$ to be the path or cycle formed by the edges in $E(C_{e})\setminus E(C)$. If $C_e$ has no edges in $C$ then we say that $e$ is \defin{$C$-null}. Otherwise, we say that $e$ is \defin{$C$-nonnull}.

  Consider the auxiliary graph $H$ with vertex set $V(H):= E(G_{r})\setminus E(U)$ in which two distinct elements $e$ and $e'$ are adjacent in $H$ if $\dist_G(V(P_{e}),V(P_{e'})) \le d$.  Let $I$ be a maximal independent set in $H$. We split the proof into two cases:
  \begin{enumerate*}[label=\rm(\arabic*)]
    \item $|I|\ge k+\frac{1}{2}s(k)$, in which case we show that \cref{grow_unicycle:item:packing} holds; or
    \item $|I|< k+\frac{1}{2}s(k)$, in which case we show that \cref{grow_unicycle:item:hitting} holds.
  \end{enumerate*}

  We first consider the case where $|I|\ge k+\frac{1}{2}s(k)$.  Let $J$ be the set of $C$-null edges in $I$. We now show that $\set{C_e \mid e\in J}$ is a $d$-packing of cycles in $G$.  Consider two distinct $e,e'\in J$.  We must show that $\dist_G(V(C_e),V(C_{e'}))>d$. Since $e$ and $e'$ are $C$-null, $P_e=C_e$ and $P_{e'}=C_{e'}$. Since $e$ and $e'$ are both in $I$, $\dist_G(V(C_e),V(C_{e'}))=\dist_G(V(P_e),V(P_{e'}))>d$. Thus, $\set{C_e \mid e\in J}$ is indeed a $d$-packing of $G$.  If $|J|\ge k$ then this establishes \eqref{grow_unicycle:item:packing}.

  Otherwise $|J|<k$, so $|I\setminus J|\ge \tfrac{1}{2}s(k)$. Let $G':=C\cup\bigcup_{e\in I\setminus J} P_e$. Since $\dist_G(V(P_e),V(P_{e'}))>d\ge0$ for all distinct $e,e'\in J$, $G'$ contains only vertices of degree $2$ or $3$, and the degree-$3$ vertices of $G'$ are the endpoints of paths in $\set{P_e \mid e\in I\setminus J}$. Therefore, $G'$ contains $2|I\setminus J|\geq s(k)$ vertices of degree $3$.

  By \cref{thm:simonovits}, $G'$ contains a set $\mathcal{D}$ of $k$ pairwise vertex-disjoint cycles. We now show that $\mathcal{D}$ is a $d$-packing of cycles in $G$.  
  Let $D$ and $D'$ be two distinct cycles in $\mathcal{D}$. Let $v$ and $v'$ be vertices of $D$ and $D'$, respectively, such that $\dist_G(v,v')=\dist_G(V(D),V(D'))$.  We must show that $\dist_G(v,v')>d$.

  If $v\in V(P_e)$ and $v'\in V(P_{e'})$ for some  $e,e' \in I$ such that $P_e\subseteq D$ and $P_{e'}\subseteq D'$ then, since $D$ and $D'$ are vertex-disjoint, $e\neq e'$. Therefore $\dist_G(v,v')\geq  \dist_G(V(P_e),V(P_{e'}))>d$, as desired.  Thus, without loss of generality we may assume that $v\in V(C)$ and that $v\notin V(P_e)$ for any $e\in I\setminus J$ with $P_e\subseteq D$.

  Let $P$ be a shortest path in $G$ between $v$ and $v'$.  Then $P$ is a shortest path in $G$ from $V(D)$ to $V(D')$, so $E(P)\cap E(D)=\emptyset$ and $E(P)\cap E(D')=\emptyset$.  If the first edge of $P$ (incident to $v$) is an edge of $C$, then this edge and the two edges of $D$ incident to $v$ are contained in $G'$. Therefore $v$ is of degree $3$ in $G'$ and $v$ is the endpoint of $P_e$ for some $e\in I$ such that $P_e\subseteq D$.  This contradicts the definition of $v$. Thus, we assume that the first edge of $P$ is not an edge of $C$.

  If $P$ contains a vertex not in $B_G(V(C),d)$ then $\dist_G(v,v')=\len(P)>d$, as required.  Thus we assume that $V(P)\subseteq B_G(V(C),d)$. In particular, since $C$ is $d$-unicyclic in $G$ and the first edge of $P$ is not an edge of $C$, $V(P)\cap V(C)=\set{v}$.

  Since $V(P)\cap V(C)=\set{v}$,  $v'\notin V(C)$.  Therefore $v'$ is an internal vertex of $V(P_{e'})$ for some $e'\in I\setminus J$. 
  Since every internal vertex of $P_{e'}$ has degree $2$ in $G'$, this implies that  $P_{e'}\subseteq D'$.    
  Since $C$ is $d$-unicyclic in $G$, every path in $G[B_G(V(C),d)]$ from $V(C)$ to $v'$ contains an endpoint of $P_{e'}$.  In particular, $P$ contains an endpoint of $P_{e'}$.  Since $V(P)\cap V(C)=\{v\}$, this implies that $v$ is an endpoint of $P_{e'}$.  Then $v$ is a vertex of $P_{e'}\subseteq D'$, which contradicts the fact that $D$ and $D'$ are vertex-disjoint.

  It remains to consider the case when $|I| < k+\frac{1}{2}s(k)$.  We now define $X\subseteq V(G)$ and $Y\subseteq V(G)$ and show that these sets satisfy \cref{grow_unicycle:item:hitting}.

  Since $I$ is a maximal independent set in $H$, it is a dominating set in $H$: Every vertex of $H$ is either in $I$ or adjacent to a vertex in $I$.  In $G$ this corresponds to the fact that, for every edge of $e=uv\in E(G_r)\setminus E(U)$ there exists $e'=u'v'\in I$ such that $\dist_G(V(P_{e}),V(P_{e'}))\le d$.  (Possibly $e=e'$ which can occur when $e\in I$.)

  Define $X:=\bigcup_{uv\in I}\{u,v\}$.  Then $|X|\le 2|I| < 2k+s(k)$.  To define $Y$ we process each edge $e:=uv\in E(G_r)\setminus E(U)$ as follows: Let $e':=u'v'\in I$ be such that $\dist_G(V(P_{e}),V(P_{e'}))\le d$.  We choose an endpoint of $e$ to include in $Y$ and show that $B_G(\{u',v'\}, 2r+d)$ contains this endpoint. Since $\dist_G(u',V(C))\leq r$ and $\dist_G(v',V(C))\leq r$, we have $V(P_{e'}) \subseteq B_G(\set{u',v'},r)$. Since $\dist_G(V(P_e),V(P_{e'}))\le d$,  $B_G(\set{u',v'},r+d)$ contains a vertex of $P_e$. In particular, $B_G(\set{u',v'},r+d)$ contains a vertex on the shortest path $P_u$ in $U$ from $u$ to $V(C)$ or $B_G(\set{u',v'},r+d)$ contains a vertex on the shortest path $P_v$ in $U$ from $v$ to $V(C)$.

  Without loss of generality, suppose $B_G(\set{u',v'},r+d)$ contains a vertex $x$ of $P_u$.  Then $B_G(x,r)$ contains all $U$-descendants of $u$.  Therefore, $B_{G}(\set{u',v'},2r+d)$ contains all $U$-descendants of $u$.  We add $u$ to $Y$. Processing each edge $uv\in E(G_r)\setminus E(U)$ this way produces a set $Y$ so that $X$ and $Y$ satisfy \cref{grow_unicycle:item:hitting}.
\end{proof}

\begin{lem}\label{double_unicycle}
  Let $r,d,k$ be integers with $1 \leq d\leq r$ and $k\geq1$. 
  Let $G$ be a graph, let $C_1$ and $C_2$ be $d$-unicyclic cycles in $G$ such that $\dist_G(V(C_1),V(C_2))>2d$.  For each $i\in[2]$ let $G_{i,r}:=G[B_G(V(C_i),r)]$, let $U_i$ be a $C_i$-rooted spanning BFS-unicycle of $G_{i,r}$ and let $Y_i\subseteq V(G)$ be such that each edge in $E(G_{i,r})\setminus E(U_i)$ has an endpoint in $Y_i$.   Then either:
  \begin{tightenum}
    \item\label[p]{double_unicycle:packing} $G$ has a $d$-packing of $k$ cycles; or
    \item\label[p]{double_unicycle:hitting} there exist $Y\subseteq V(G)$ and $X\subseteq V(G)$ with $|X|< s(k)$ such that
    \begin{tightenum}
      \item\label[p]{y_covers_everything} $Y$ contains every vertex in $V(G_{1,r})\cap V(G_{2,r})$; 
      and
      \item for each vertex $y\in Y$ either \label[p]{something_hits_y}
      \begin{enumerate}[nosep,noitemsep,label=\rm(\alph*),ref=\alph*,start=24]
        \item\label[p]{du_yi_hits_y}  $y$ is a $U_i$-descendant of a vertex in $Y_i$ for some $i\in[2]$; or
        \item\label[p]{du_ball_hits_y} $y\in B_G(X,2r+d)$.
      \end{enumerate}
    \end{tightenum}
  \end{tightenum}
\end{lem}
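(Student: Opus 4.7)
The plan is to mimic the proof of \cref{grow_unicycle}, with the role of non-tree edges replaced by the objects that $Y$ has to cover. Let $\mathcal{Z}$ be the set of all vertices in $V(G_{1,r})\cap V(G_{2,r})$ together with all edges of $G$ having one endpoint in $V(G_{1,r})$ and the other in $V(G_{2,r})$. For each $z\in\mathcal{Z}$, attach a witness walk $L_z$ from $V(C_1)$ to $V(C_2)$ that passes through $z$: for a cross-edge $z=uv$ with $u\in V(G_{1,r})$ and $v\in V(G_{2,r})$, take $L_z$ to be the concatenation of the unique shortest path from $u$ to $V(C_1)$ in $U_1$, the edge $uv$, and the unique shortest path from $v$ to $V(C_2)$ in $U_2$; for an intersection vertex $z=w$, take $L_z$ to be the union of the shortest path from $w$ to $V(C_1)$ in $U_1$ and the shortest path from $w$ to $V(C_2)$ in $U_2$. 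Each $L_z$ has length at most $2r+1$, and every vertex of $L_z$ is within distance $r$ of its $V(C_1)$-endpoint $c_z^1$ or its $V(C_2)$-endpoint $c_z^2$.

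Form the auxiliary graph $H$ on $\mathcal{Z}$ by making $z\sim z'$ whenever $\dist_G(V(L_z),V(L_{z'}))\leq d$, and take a maximal independent set $I$ in $H$; the walks $\{L_z:z\in I\}$ are then pairwise vertex-disjoint and pairwise at $G$-distance greater than $d$. Split the argument by $|I|$. If $|I|\geq\lceil s(k)/2\rceil$, form $G':=C_1\cup C_2\cup\bigcup_{z\in I}L_z$. Using pairwise vertex-disjointness of the $L_z$'s together with the $d$-unicyclicity of $C_1,C_2$, verify that every vertex of $G'$ has degree $2$ or $3$ and that the $2|I|$ path-endpoints $\{c_z^1,c_z^2:z\in I\}$ are of degree exactly $3$; \cref{thm:simonovits} then produces $k$ pairwise vertex-disjoint cycles $\mathcal{D}$ in $G'$. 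To promote vertex-disjointness in $G'$ to $d$-apartness in $G$, follow the final part of the proof of \cref{grow_unicycle}: for distinct $D,D'\in\mathcal{D}$, pick $v\in V(D)$ and $v'\in V(D')$ minimising $\dist_G(v,v')$, and let $P$ be a shortest path in $G$ from $v$ to $v'$. Either both $v$ and $v'$ lie on distinct $L_z, L_{z'}$ with $z,z'\in I$, in which case $\dist_G(v,v')\geq\dist_G(V(L_z),V(L_{z'}))>d$, or a symmetric WLOG puts $v\in V(C_1)\cup V(C_2)$, and then the $d$-unicyclicity of the corresponding $C_i$ forces $v$ to lie on some $L_{z'}\subseteq D'$, contradicting vertex-disjointness of $D,D'$.

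In the hitting case $|I|<\lceil s(k)/2\rceil$, set $X:=\bigcup_{z\in I}\{c_z^1,c_z^2\}$, so $|X|\leq 2|I|<s(k)$. Build $Y$ by adding every intersection vertex $w$ (taking $y:=w$) and, for each cross-edge $z^*=uv\in\mathcal{Z}$, one carefully chosen endpoint. Concretely, for any $z^*\in\mathcal{Z}$ the maximality of $I$ yields some $z\in I$ with $\dist_G(V(L_{z^*}),V(L_z))\leq d$, so some $x\in V(L_{z^*})$ lies within $d$ of $V(L_z)$; since $V(L_z)\subseteq B_G(X,r)$, this gives $x\in B_G(X,r+d)$. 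For a cross-edge $z^*=uv$, pick $y:=u$ if $x$ lies on the $U_1$-side of $L_{z^*}$ and $y:=v$ otherwise; for an intersection vertex $z^*=w$, keep $y:=w$. In each case the BFS property of the relevant $U_i$ puts $y$ within distance $r$ of $x$, so $y\in B_G(X,2r+d)$ and \cref{du_ball_hits_y} holds. The alternative branch \cref{du_yi_hits_y} is not needed within this lemma's proof; it is provided for the subsequent combination in \cref{all_the_ys}. I expect the main obstacle to be the packing case: one has to rigorously verify that $G'$ has maximum degree $3$ even though an internal vertex of an $L_z$-walk may in principle lie on $V(C_2)$ (or $V(C_1)$) when the two cycles are within distance $r$ in $G$, and to adapt the single-cycle $d$-unicyclicity endpoint argument of \cref{grow_unicycle} to the two-cycle setting of $C_1$ and $C_2$.
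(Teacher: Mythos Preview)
Your overall strategy matches the paper's closely: build walks from $V(C_1)$ to $V(C_2)$ through each element that $Y$ must cover, form an auxiliary graph on these walks with adjacency given by $G$-distance at most $d$, take a maximal independent set $I$, and split on $|I|$. The hitting case you sketch is essentially correct and agrees with the paper.

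The gap is in the packing case, and it is caused precisely by your decision to drop branch \cref{du_yi_hits_y}. In the paper, before forming $H$ one first discards every element whose associated path contains a $U_i$-descendant of some vertex in $Y_i$ (the \emph{uninteresting} elements); these are later covered in the hitting case via \cref{du_yi_hits_y}. This filter is not cosmetic. In the packing analysis, after reducing to $v\in V(C_1)$, $V(P)\subseteq B_G(V(C_1),d)$, and $v'$ an internal vertex of some $P_{z'}\subseteq D'$, you want to conclude that $v$ coincides with the $C_1$-endpoint $w'$ of $P_{z'}$. The $d$-unicyclicity of $C_1$ only tells you that $v$ is the foot of $v'$ in $G[B_G(V(C_1),d)]$; to identify this foot with $w'$ you need the portion of $P_{z'}$ between $w'$ and $v'$ that lies in $B_G(V(C_1),r)$ to sit inside $U_1$. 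But $P_{z'}$ has a $U_2$-segment, and when that segment re-enters $B_G(V(C_1),r)$ its edges need not be in $U_1$---unless $P_{z'}$ avoids $Y_1$, which is exactly what the ``interesting'' restriction guarantees. Without the filter, $v'$ can lie on the $U_2$-part of $P_{z'}$ inside $B_G(V(C_1),d)$ (this is possible since we only know $\dist_G(V(C_1),V(C_2))>2d$, not $>r$), and then the foot of $v'$ need not appear on $P_{z'}$ at all.

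A smaller point: your $L_z$ is a walk, not necessarily a path, and can touch $C_1\cup C_2$ at internal vertices; the paper fixes this by extracting from $Q_x$ a \emph{shortest} $V(C_1)$--$V(C_2)$ path $P_x$. You should do the same before invoking \cref{thm:simonovits}. But the substantive repair is to reinstate the interesting/uninteresting split and use \cref{du_yi_hits_y} for the uninteresting elements.
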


\begin{proof}
  For each $i\in[2]$ and each $v\in V(U_i)$, let $P_{i,v}$ be the shortest path from $v$ to $V(C_i)$ in $U_i$. Let $Y:=B_G(V(C_1),r)\cap B_G(V(C_2),r)$. 
  For each $x$ in 
  $Y$, 
  define $Q_x:=P_{1,x}\cup P_{2,x}$ and define $P_x$ to be a shortest path from $V(C_1)$ to $V(C_2)$ in $Q_x$.

  Let $x$ be an element of $Y$.   
  Since $\dist(V(C_1),V(C_2))>d\ge 0$, the cycles $C_1$ and $C_2$ are vertex-disjoint.  The path $P_x$ is a non-trivial path between $V(C_1)$ and $V(C_2)$ of length at least $1$ and at most $\len(P_{1,x})+\len(P_{2,x})\le 2r$.  ($P_x$ does not necessarily contain the 
  vertex $x$.) We say that 
  $x\in Y$ 
  is \defin{uninteresting} if $P_x$ contains a $U_i$-descendant of some vertex in $Y_i$ for some $i\in[2]$,  otherwise $x$ is \defin{interesting}.

  Let $H$ be an auxiliary graph whose vertex set is the set of interesting elements in  
  $Y$ 
  and in which two distinct vertices $x$ and $x'$ are adjacent if and only if $\dist_G(V(P_x),V(P_{x'}))\leq d$. Let $I$ be a maximal independent set in $H$. We split the proof into two cases.  If $|I|\geq \frac{1}{2}s(k)$ we will show that outcome \cref{double_unicycle:packing} holds.  Otherwise, we show that outcome \cref{double_unicycle:hitting} holds.

  First consider the case in which $|I|\ge\tfrac{1}{2}s(k)$. Let $G':=C_1\cup C_2\cup\bigcup_{x\in I}P_x$. For distinct $x,x'\in I$, $xx'$ is not an edge of $H$, so $\dist_G(V(P_x),V(P_{x'}))>d\ge 1$. This together with the fact that $C_1$ is vertex-disjoint from $C_2$ gives that $G'$ contains only vertices of degree $2$ and degree $3$. Recall that for each 
  $x\in Y$, 
  the path $P_x$ contains two different endpoints. Therefore, the degree-$3$ vertices of $G'$ are exactly the endpoints of paths in $\set{P_x\mid x\in I}$. This implies that $G'$ contains $2|I|\geq s(k)$ vertices of degree $3$. By \cref{thm:simonovits}, $G'$ contains a set $\mathcal{D}$ of $k$ pairwise vertex-disjoint cycles.

  We claim that $\mathcal{D}$ is a $d$-packing of cycles in $G$. 
  Let $D$ and $D'$ be two distinct cycles in $\mathcal{D}$ and let $v\in V(D)$ and $v'\in V(D')$ be such that $\dist_G(v,v')=\dist_G(V(D),V(D'))$.  We must show that $\dist_G(v,v')>d$.

  If $v\in V(P_x)$ and $v'\in V(P_{x'})$ for some  $x,x' \in I$ such that $P_x\subseteq D$ and $P_{x'}\subseteq D'$ then, since $D$ and $D'$ are vertex-disjoint, $x\neq x'$, and therefore $\dist_G(v,v'))\ge  \dist_G(V(P_x),V(P_{x'}))>d$, as desired.  Therefore, without loss of generality we may assume that $v\in V(C_1)$ and that $v\notin V(P_x)$ for any $x\in I$ with $P_x\subseteq D$.

  Let $P$ be a shortest path in $G$ between $v$ and $v'$.  Since $P$ is a shortest path in $G$ from $V(D)$ to $V(D')$, $E(P)\cap E(D)=\emptyset$ and $E(P)\cap E(D')=\emptyset$.  If the first edge $e$ of $P$ (incident to $v$) is an edge of $C_1$, then $e$ and the two edges of $D$ incident to $v$ are contained in $G'$. Therefore $v$ has degree $3$ in $G'$ and $v$ is the endpoint of $P_y$ for some $y\in I$ such that $P_y\subseteq D$.  This contradicts the definition of $v$. Thus, we assume that the first edge of $P$ (incident to $v$) is not in $C_1$.

  If $P$ contains a vertex not in $B_G(V(C_1),d)$ then $\dist_G(v,v')=\len(P)>d$, as required.  Thus we assume that $V(P)\subseteq B_G(V(C_1),d)$. Since $C_1$ is $d$-unicyclic in $G$, this implies that $P\subseteq U_1$ with $\len(P)\leq d$ and $V(P)\cap V(C_1)=\set{v}$. Since $v'$ is an endpoint of $P$, this implies that $v'\notin V(C_1)$.  If $v'\in V(C_2)$ then $\dist_G(v,v')\ge \dist_G(V(C_1),V(C_2))>2d\geq d$, as required.

  The only remaining possibility is that $v'$ is an internal vertex of $P_{x'}$ for some $x'\in I$.  Since every internal vertex of $P_{x'}$ has degree $2$ in $G'$, this implies that  $P_{x'}\subseteq D'$. Let $w'$ be the endpoint of $P_{x'}$ in $C_1$. Since $x'\in I\subseteq V(H)$, $x'$ is interesting. Therefore $P_{x'}\subseteq P_{1,x'}\cup P_{2,x'}$ contains no vertex in $Y_1\cup Y_2$.

  Consider the maximal subpath $Q'$ of $P_{x'}$ that contains $w'$ and whose vertices are contained in $B_G(V(C_1),r)$.  We distinguish between two cases, depending on whether $v'\in V(Q')$ or not.

  First suppose that $v'\in V(Q')$. Let $Q$ be the subpath of $Q'$ from $w'$ to $v'$. Since $Q\subseteq Q'\subseteq P_{x'}$ contains  no vertex in $Y_1$ and $V(Q')\subseteq B_G(V(C_1),r)$, we have that $Q\subseteq U_1$. Since $U_1$ is a $C_1$-rooted spanning BFS-unicycle of $G_{1,r}$, $\dist_G(w',v')=\dist_{U_1}(w',v')=\len(Q)$. Observe that $Q$ is a shortest path in $G$ between $V(C_1)$ and $v'$, and thus $\len(Q) \leq \len(P) \leq d$. It follows that $Q\subseteq G[B_G(V(C_1),d)]$. Since $C_1$ is $d$-unicyclic, all paths from $V(C_1)$ to $v'$ in $G[B_G(V(C_1),d)]$ contain $w'$. In particular, $P\subseteq G[B_G(V(C_1),d)]$ is a path from $v\in V(C_1)$ to $v'$, so $w'\in V(P)$.  Since $V(P)\cap V(C_1)=\{v\}$, this implies that $v=w'$. However $v=w' \in V(P_{x'})\subseteq V(D')$. This implies that $v$ is a vertex of both $D$ and $D'$, which contradicts the fact that $D$ and $D'$ are vertex-disjoint.

  Next we consider the case when $v'\notin V(Q')$. Let $q$ be the last vertex of $Q'$ and let $q'$ be the neighbour of $q$ in $P_{x'}$ that is not in $Q'$.  Then
  \begin{align*}
  \len(P_{x'})
    & \ge \dist_G(w',q') + \dist_G(q',v') + \dist_G(v',V(C_2)) \\
    & \ge \dist_G(V(C_1),q') + (\dist_G(V(C_1),q') - \dist_G(V(C_1),v')) \\
    & \quad {} + (\dist_G(V(C_1),V(C_2))-\dist_G(v',V(C_1)) \\
    & = 2\dist_G(V(C_1),q') - 2\dist_G(V(C_1),v') + \dist_G(V(C_1),V(C_2)) \\
    & \geq 2(r+1) - 2d + (2d+1) \geq 2r+3. 
  \end{align*}
  (The second inequality follows from $\dist_G(V(C_1),q')\leq \dist_G(V(C_1),v')+\dist_G(v',q')$ and $\dist_G(V(C_1),V(C_2))\le \dist_G(V(C_1),v')+\dist_G(v',V(C_2))$.) This contradicts the fact that 
  $\len(P_{x'})\le \len(P_{1,x'})+\len(P_{2,x'})\le 2r$.  
  This completes the proof (assuming $|I|\ge\tfrac{1}{2}s(k)$) that outcome \cref{double_unicycle:packing} holds.

  What remains is to consider the case where $|I|<\frac{1}{2}s(k)$.  Let
  \[
    \textstyle X:=\bigcup_{y\in I}\left( V(P_y)\cap(V(C_1)\cup V(C_2))\right), 
  \]
  be the set of degree-$3$ vertices in $G'$. Note that $|X| = 2\cdot|I| < s(k)$. 
  We show that $X$ and $Y$ satisfy \cref{double_unicycle:hitting}.


  Consider some vertex $y\in Y$.  
  If $y$ is a $U_i$-descendant of some vertex in $Y_i$ for some $i\in[2]$, then $y$ satisfies \cref{du_yi_hits_y}. 
  Therefore, we assume that this is not the case, which implies that $y$ is interesting, so $y\in V(H)$. Since $I$ is a maximal independent set in $H$, the set $I$ is also a dominating set. Therefore, either $y\in I$ or $y$ has a neighbor in $I$.  In $G$, this translates into the statement: there exists $y'\in I$ such that $\dist_G(V(P_y),V(P_{y'}))\le d$.
  (This includes the case $y=y'$, which applies when $y\in I$.)

  Let $y'\in I$ be such that $\dist_G(V(P_{y'}),V(P_y))\le d$.  Then $X$ contains the endpoints $u_1'$ and $u_2'$ of $P_{y'}$.  Then $V(P_{y'})\subseteq V(P_{1,y'}\cup P_{2,y'})\subseteq B_G(\{u_1',u_2'\},r)$.  Since $\dist_G(V(P_{y'}),V(P_y))\le d$, we have that $B_G(\{u_1',u_2'\},r+d)$ contains some vertex in $V(P_y)\subseteq V(P_{1,y})\cup V(P_{2,y})$, and thus $B_G(\{u_1',u_2'\},2r+d)$ contains $y$, as desired. 
  

  We have just shown that $X$ and $Y$ satisfy \cref{something_hits_y}.  Furthermore, $Y=V(G_{1,r})\cap V(G_{2,r})$. 
  Thus, the sets $X$ and $Y$ satisfy the conditions of outcome \cref{double_unicycle:hitting}.
\end{proof}

\begin{proof}[Proof of \cref{all_the_ys}]
  Let $\mathcal{C}$ be a $2d$-packing of $d$-unicyclic cycles in $G$. If $|\mathcal{C}|\geq k$, then~\cref{grow_unicycle:item:packing} holds. Thus, let $\mathcal{C}:=\{C_1,\ldots,C_p\}$ and we assume that $p<k$.

  For each $i\in[p]$, let $U_i$ be a spanning BFS-unicycle of $G[B_G(V(C_i),r)]$. 
  For each $i\in[p]$, we apply~\cref{grow_unicycle} to $C_i$ and $U_i$.
  If any of these applications of \cref{grow_unicycle} produces a $d$-packing of $k$ cycles in $G$, then $G$ satisfies (a) and there is nothing to prove.  
  Thus, for each $i\in[p]$ we obtain $Y_i\subseteq V(G)$ and $X_i\subseteq V(G)$ as in the statement of \cref{grow_unicycle}. 
  For each $\{i,j\}\in \binom{[p]}{2}$, 
  we apply \cref{double_unicycle} to $C_i$, $U_i$, $Y_i$, and $C_j$, $U_j$, $Y_j$. 
  If any of these applications of \cref{double_unicycle} produces a $d$-packing of $k$ cycles in $G$, then $G$ satisfies (a) and there is nothing to prove.
  Thus, for each $\{i,j\}\in \binom{[p]}{2}$ we obtain 
  $Y_{\{i,j\}}\subseteq V(G)$ and $X_{\{i,j\}}\subseteq V(G)$ as in the statement of \cref{double_unicycle}.
  Now, the unicycles $U_1,\ldots,U_p$ and the sets
  \begin{align*}
    Y & :=\textstyle\bigcup_{i\in[p]} Y_i\cup \bigcup_{\{i,j\}\in\binom{[p]}{2}} Y_{\{i,j\}}\text{, and} \\
    X & :=\textstyle\bigcup_{i\in[p]} X_i\cup \bigcup_{\{i,j\}\in\binom{[p]}{2}} X_{\{i,j\}}
  \end{align*}
  satisfy the conditions of \cref{all_the_ys}.
  In particular, $|X| < k\cdot(2k+s(k))+\binom{k}{2}\cdot s(k)$, as desired.
\end{proof}

We are now ready to prove (the following quantitative version of) \cref{thm:main-in-intro}.

\begin{thm}\label{thm:the-big-ball-of-wax}
  Let $f$ and $g$ be the following functions:
  \[
    \textstyle f(x)
      = 2x+2x^2 + (\binom{x}{2}+x)\cdot s(x) + \ell^\star(x+\tfrac{1}{2}s(x),3), \qquad
    g(x)= 19x.
  \]
  For all integers $k\ge 1$ and $d\ge 1$, for every graph $G$, either $G$ contains a $d$-packing of $k$ cycles or there exists $X\subseteq V(G)$ with $|X|\leq f(k)$ such that $G-B_G(X,g(d))$ is a forest.
\end{thm}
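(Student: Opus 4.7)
The plan is to combine \cref{all_the_ys} with \cref{thm:gyarfas-lehel-general}. Set $r:=9d$, so that the set $Y$ output by \cref{all_the_ys} satisfies $Y\subseteq B_G(X_0,2r+d)=B_G(X_0,g(d))$. Assume $G$ has no $d$-packing of $k$ cycles, and let $\mathcal{C}=\{C_1,\ldots,C_p\}$ be a maximal (inclusion-wise) $2d$-packing of $d$-unicyclic cycles in $G$; since $\mathcal{C}$ is itself a $d$-packing, $p<k$. Applying \cref{all_the_ys} to $\mathcal{C}$ either outputs the desired $d$-packing, or yields BFS-spanning unicycles $U_i$ of $G[B_G(V(C_i),9d)]$ and sets $X_0,Y\subseteq V(G)$ with $|X_0|<2k^2+(\binom{k}{2}+k)\,s(k)$, $Y\subseteq B_G(X_0,19d)$, and satisfying \cref{atys_unicycles,atys_intersections}.

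I would then add the two endpoints of a chosen edge $e_i\in E(C_i)$ to $X_0$ for each $i\in[p]$, obtaining $X_1$ with $|X_1|<2k+2k^2+(\binom{k}{2}+k)\,s(k)$; this breaks each cycle $C_i$ and ensures that $U_i$ minus these two vertices is a tree. Let $C^*$ be any cycle surviving in $G-B_G(X_1,19d)$. From \cref{atys_unicycles,atys_intersections} it follows that $V(C^*)\cap Y=\emptyset$, that $C^*$ uses no edge of $G[B_G(V(C_i),9d)]\setminus E(U_i)$, and that $C^*$ has no edge between distinct balls $B_G(V(C_i),9d)$ and $B_G(V(C_j),9d)$. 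Moreover, by maximality of $\mathcal{C}$ combined with \cref{short_or_unicycle_nearby} applied to $C^*$ with parameter $d$, the cycle $C^*$ cannot itself be $d$-unicyclic---otherwise \cref{short_or_unicycle_nearby:unicyclic} would yield a $d$-unicyclic cycle within $B_G(V(C_j),4d)\subseteq B_G(X_1,19d)$ for some $j$. Hence \cref{short_or_unicycle_nearby:short} provides a short cycle $C'$ of length at most $6d+2$ with $V(C')\subseteq B_G(V(C^*),3d)$.

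To handle the remaining short cycles, I build an auxiliary forest $F$ from the $U_i$'s (deleting one cycle-edge per $U_i$) together with a BFS spanning forest of the rest of $G$. For each short cycle $C'$ arising above I associate a subgraph $A_{C'}\subseteq F$ encoding the trace of $C'$ in $F$; a case analysis---using that $C'$ has length at most $6d+2$, spends most of its edges outside the balls $B_G(V(C_i),9d)$, and can enter each ball only along a path in the corresponding $U_i$ minus the deleted cut edge---shows that $A_{C'}$ has at most three connected components. Applying \cref{thm:gyarfas-lehel-general} to the family $\mathcal{A}:=\{A_{C'}\}$ with parameters $(k+\tfrac12 s(k),3)$ then either produces $k+\tfrac12 s(k)$ pairwise vertex-disjoint members of $\mathcal{A}$---which a Simonovits-style argument modelled on the one inside \cref{grow_unicycle} converts into a $d$-packing of $k$ cycles in $G$---or yields a set $X_2$ of size at most $\ell^\star(k+\tfrac12 s(k),3)$ meeting every $A_{C'}$. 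Setting $X:=X_1\cup X_2$ gives $|X|\le f(k)$ and $G-B_G(X,19d)$ has no cycles.

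The main obstacle is the third paragraph: pinning down a choice of $F$ and $A_{C'}$ for which the three-component bound holds (this being the source of the constant~$3$ appearing in $\ell^\star(\cdot,3)$), and verifying that vertex-disjointness of members of $\mathcal{A}$ in $F$ translates to enough pairwise $G$-distance between the associated short cycles to feed a Simonovits degree-$3$ count and extract a genuine $d$-packing of $k$ cycles from $k+\tfrac12 s(k)$ packed subforests.
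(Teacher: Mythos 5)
Your high-level blueprint matches the paper's — take a maximal $2d$-packing of $d$-unicyclic cycles, run \cref{all_the_ys}, then invoke \cref{thm:gyarfas-lehel-general} with $c=3$ to obtain the final term $\ell^\star(k+\tfrac12 s(k),3)$ — but two specific steps are unsound as written, and the paper's actual mechanism for the Gyárfás–Lehel application is fundamentally different from what you outline.

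The first gap is in your second paragraph. You rule out the surviving cycle $C^*$ being close to a $d$-unicyclic cycle by the maximality of $\mathcal{C}$, invoking an inclusion $B_G(V(C_j),4d)\subseteq B_G(X_1,19d)$. This inclusion is false in general: your $X_1$ contains only two vertices of each $C_j$, so when $C_j$ is a long cycle, most of $V(C_j)$ — and hence much of $B_G(V(C_j),4d)$ — can lie arbitrarily far from $X_1$. Thus maximality does not rule out a $d$-unicyclic cycle near $C^*$: such a cycle could be within $2d$ of some $C_j$ yet still avoid $B_G(X_1,19d)$. The paper sidesteps this by running a separate iterative process \emph{before} invoking \cref{all_the_ys}: it greedily extracts a $d$-packing $\mathcal{D}'=\{D'_1,\ldots,D'_q\}$ of cycles of length at most $6d+2$, each at distance greater than $4d$ from $\bigcup_i V(C_i)$ and from all earlier $D'_j$. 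It is exactly that distance greater than $4d$ from $\bigcup_i V(C_i)$ that makes the contradiction with maximality of $\mathcal{C}$ work when \cref{short_or_unicycle_nearby} threatens to produce case \cref{short_or_unicycle_nearby:unicyclic}; the short cycles that do arise are absorbed into a set $Y_0\subseteq B_G(X_0,7d+1)$, with one representative vertex per short cycle (so $|X_0|<k$). You have no substitute for this stage.

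The second gap, which you partly acknowledge, is that your third paragraph conflates the object to which Gyárfás–Lehel is applied. The paper does \emph{not} apply it to traces of short cycles. After the preprocessing above, any cycle of $G$ avoiding $B_G(\widehat{Y},r)$ must traverse the induced forest $F_0^-$ between two ball entries (\cref{hit_cycle}); this yields the notion of an admissible tuple $t$ and its walk $W_t$ with a forest leg and two legs inside $U_i, U_j$. The associated subgraph $\Psi^\star(t)$ of the disjoint union $F^\star$ of $F_0,\ldots,F_p$ is what has at most $3$ components (\cref{clm:three-components}); the $d$-padding $B_G(\cdot,d)$ built into each $\Psi_\ell(t)$ is precisely what makes vertex-disjointness in $F^\star$ imply $\dist_G(V(W_{t_1}),V(W_{t_2}))>d$ (\cref{w_distance}), and \cref{hungarians_hit} transfers the hitting set back to the cycles with radius $r$. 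Your proposal — hitting a nearby short cycle $C'$ by a subgraph $A_{C'}$ of a single BFS forest — leaves both the 3-component bound and the disjointness-to-distance translation open, and it is not clear the latter can be salvaged without the explicit $d$-padding the paper uses.

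Finally, a minor numeric note: you set $r=9d$, giving $2r+d=19d$ directly for $Y$; the paper uses $r=6d$ and arrives at $19d$ via $\widehat{Y}\subseteq B_G(\widehat{X},13d)$ plus the extra radius $r$ in $B_G(\widehat{Y},r)$ from \cref{hit_cycle}. Either choice is workable, but the arithmetic downstream differs and would need to be redone to certify $g(d)=19d$.
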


\begin{proof}
  Let $k\ge 1$ and $d\ge 1$ be integers and let $G$ be a graph. If $G$ contains a $d$-packing of $k$ cycles then there is nothing to prove. Thus, assume the opposite. Let $\set{\mathdefin{C_1,\ldots,C_p}}$ be a maximal $2d$-packing of cycles that are $d$-unicyclic in $G$.  Since $G$ has no $d$-packing of $k$ cycles, $p<k$.

  Consider the following iterative process, that constructs a sequence of cycles $\mathcal{D}':=\set{D'_1,D'_2,\ldots}$ such that $\mathcal{D}'$ is a $d$-packing of cycles each having length at most $6d+2$. Let $q$ be a nonnegative integer and suppose we have already constructed $D'_1,\ldots,D'_{q}$.

  If every cycle in $G$ contains a vertex in  $B_G(\bigcup_{i\in[p]} V(C_i),4d)\cup B_G(\bigcup_{i\in[q]}V(D'_i),4d)$ then we let the process end.  Otherwise, let $D$ be a cycle with no vertex in $B_G(\bigcup_{i\in[p]} V(C_i),4d)\cup B_G(\bigcup_{i\in[q]}V(D'_i),4d)$.   Apply \cref{short_or_unicycle_nearby} to $D$ to find a cycle $D'$ such that either
  \begin{enumerate*}[label=(\alph*),ref=\alph*]
    \item\label[p]{item:case-unicycle} $V(D')\subseteq B_G(V(D),2d)$ and $D'$ is $d$-unicyclic in $G$; or
    \item\label[p]{item:case-short-cycle} $V(D')\subseteq B_G(V(D),3d)$ and $D'$ is of length at most $6d+2$.
  \end{enumerate*}
  In case \cref{item:case-unicycle}, $V(D)\cap B_G(\bigcup_{i\in[p]} V(C_i),4d)=\emptyset$ and $V(D')\subseteq B_G(V(D),2d)$, which implies that $V(D')\cap B_G(\bigcup_{i\in[p]} V(C_i),2d)=\emptyset$. Therefore, $\set{C_1,\ldots,C_p,D'}$ is a $2d$-packing of $d$-unicyclic cycles in $G$, which contradicts the choice of $\set{C_1,\ldots,C_p}$.

  Therefore case \cref{item:case-short-cycle} applies.  By the choice of $D$, $\dist_G(V(D),V(D_i'))>4d$ for each $i\in[q]$.  Since $V(D')\subseteq B_G(V(D),3d)$, this implies that $\dist_G(V(D'),V(D'_i))>d$ for each $i\in[q]$. Set $D'_{q+1}:=D'$. Thus $\{D'_1,\ldots,D'_{q+1}\}$ is a $d$-packing of cycles in $G$. This completes the description of the process. Since $G$ has no $d$-packing of $k$ cycles, this process produces a family $\mathcal{D'}$ of size $q<k$. (Possibly $\mathcal{D'}=\emptyset$ and $q=0$.)

  Let
  \[
      \textstyle\mathdefin{Y_0}:=\bigcup_{i\in[q]}B_G(V(D'_i),4d).
  \]
  Let $\mathdefin{X_0}:=\{x_1,\ldots,x_{q}\}$ where  $x_i$ is an arbitrary vertex of $D'_i$ for each $i\in[q]$. Since $D_i'$ has length at most $6d+2$, we have $V(D_i')\subseteq B_G(x_i,3d+1)$ for each $i\in[q]$. Therefore,
  \begin{align}
    Y_0&\subseteq B_G(X_0,7d+1)\text{, and}\label{eq:M-contained-in-a-ball}\\
    |X_0|&< k.\label{eq:XM-size}
  \end{align}

  The stopping condition of the process described above ensures that
  every cycle in $G-Y_0$ contains a vertex in $\bigcup_{i=1}^p B_G(V(C_i),4d)$.

  Let
  \[
    \mathdefin{r}:=6d.
  \]

  Apply \cref{all_the_ys} to $\mathcal{C}:=\{C_1,\ldots,C_p\}$ to obtain unicyclic graphs $U_1,\ldots,U_p$, a set $Y_1\subseteq V(G)$ and a set $X_1\subseteq V(G)$ such that
  \begin{tightenum}
    \item for each $i\in[p]$, $U_i$ is a  $C_i$-rooted spanning BFS-unicycle of $G[B_G(V(C_i),r)]$;
    \item \label{item:Y_1_makes_unicyclic}
    for each $i\in[p]$, $Y_1$ contains at least one endpoint of each edge in $G[B_G(V(C_i),r)]- E(U_i)$;
  \end{tightenum}
  \begin{equation} 
  \label{eq:ball_intersectionsin_Y_1}
    B_{G}(V(C_i),r)\cap B_{G}(V(C_j),r)\subseteq Y_1
  \end{equation}
  for each $\set{i,j}\in\binom{[p]}{2}$;
  \begin{align}
    Y_1 & \subseteq B_G(X_1,2r+d)\text{, and} \label{eq:Yi-contained-in-a-ball}\\
    |X_1| & < \textstyle  2k^2+ (\binom{k}{2}+k)\cdot s(k). \label{eq:Xi-size}
  \end{align}
  For each $i\in[p]$, let \mathdefin{$y_i$} be an arbitrary vertex of $C_i$ and define $\mathdefin{Y_2}:=\mathdefin{X_2}:=\{y_1,\ldots,y_p\}$. Define
  \begin{align*}
    \mathdefin{\widehat{Y}}&:= \textstyle Y_0 \cup Y_1 \cup Y_2,\ \textrm{and}\\
    \mathdefin{\widehat{X}}&:=\textstyle X_0\cup X_1\cup X_2.
  \end{align*}
  Note that by~\eqref{eq:M-contained-in-a-ball} and \eqref{eq:Yi-contained-in-a-ball}, we have
  \begin{equation}\label{m_in_x_ball}
    \begin{split}
      \widehat{Y} & \textstyle\subseteq B_G(X_0,7d+1) \cup B_G(X_1,2r+d) \cup X_2\\
      & \subseteq B_G(\widehat{X}, 13d),
    \end{split}
  \end{equation}
  and by~\eqref{eq:XM-size} and \eqref{eq:Xi-size},  we have
  \begin{equation}
    \begin{split}
    |\widehat{X}|& \textstyle \leq |X_0| + |X_1| + |X_2|  \\
    &\textstyle< k + 2k^2+(\binom{k}{2}+k)\cdot s(k) + k.
    \end{split} \label{x_prime_size}
  \end{equation}
  Let
  \begin{align*}
    \mathdefin{F_0}
      & := \textstyle{G-\left(\widehat{Y}\cup \bigcup_{i\in[p]} B_G(V(C_i),r-2d)\right)}\text{, and}\\
    \mathdefin{F^-_0}
      & := \textstyle{G-\left(\widehat{Y}\cup \bigcup_{i\in[p]} B_G(V(C_i),r-d)\right)}. 
  \end{align*}
  Since every cycle in $G-Y_0$ contains a vertex in $\bigcup_{i\in[p]} B_G(V(C_i),r-2d)$ and $Y_0\subseteq \widehat{Y}$, $F_0$ is an induced forest in $G$. By definition, $F^-_0$  is an induced forest in $F_0$.

  For $i\in[p]$, an \defin{$i$-exit edge} is an edge of $G$ with one endpoint in $B_G(V(C_i),r-d)$ and one endpoint in $F_0^-$. A $4$-tuple $(e,i,e',j)$ is a \defin{good tuple} if
  \begin{itemize}[noitemsep,nosep]
    \item $e$ is an $i$-exit edge;
    \item $e'$ is a $j$-exit edge;
    and \item $e\neq e'$ and $e$ and $e'$ are incident to the same component of $F_0^-$.
  \end{itemize}
  (Note that possibly $i=j$.) Each good tuple $t:=(e,i,e',j)$  defines a walk $\mathdefin{W_t}:=P_1eP_0e'P_2$ in $G$ where
  \begin{itemize}[noitemsep,nosep]
    \item $P_1$ is the shortest path in $U_i$ from $V(C_i)$ to the endpoint of $e$ in $B_G(V(C_i),r-d)$;
    \item $P_0$ is the unique path in $F_0^-$ from the endpoint of $e$ in $F^-_0$ to the endpoint of $e'$ in $F^-_0$; and
    \item $P_2$ is the shortest path in $U_j$ from the endpoint of $e'$ in $B_G(V(C_j),r-d)$ to $V(C_j)$.
  \end{itemize}
  \begin{figure}[t]
  \begin{center}
  \includegraphics{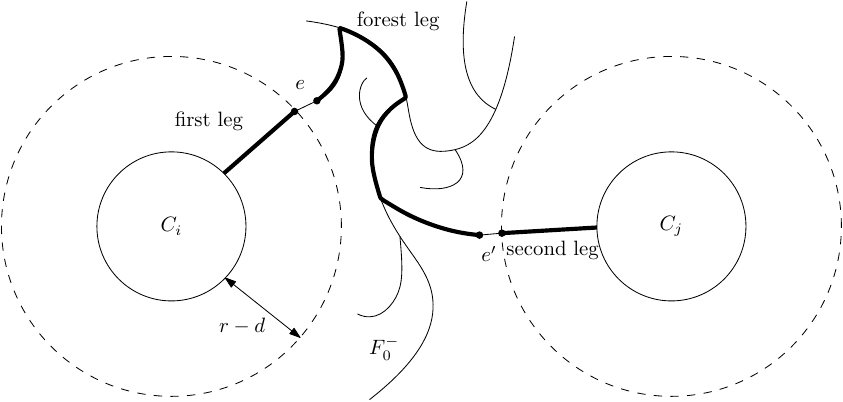}
  \end{center}
  \caption{A good tuple $t=(e,i,e',j)$ and the walk $W_t$.}
  \label{fig:good-tuple}
\end{figure}

  We call the path $P_0$ the \defin{forest leg} of $W_t$, $P_1$ the \defin{first leg} of $W_t$, and $P_2$ the \defin{second leg} of $W_t$.  The walk $eP_0e'$ is called the \defin{extended forest leg} of $W_t$.  Note that the first and second legs of $W_t$ each have exactly $r-d$ edges, while the forest leg of $W_t$ could be arbitrarily long.

  A tuple $t$ is \defin{admissible} if $t$ is good and $B_G(V(W_t),d) \cap \widehat{Y} = \emptyset$.  The following claim allows us to finish the proof by finding a set of balls that intersect the extended forest leg of each admissible tuple.

  \begin{clm}\label{hit_cycle}
    Let $C$ be a cycle in $G$.  Then  $C$ contains a vertex in $B_G(\widehat{Y},r)$ or $C$ contains the extended forest leg of some admissible tuple.
  \end{clm}

  \begin{clmproof}
    We may assume that $C$ contains no vertex in $B_G(\widehat{Y},r)$ since, otherwise, there is nothing to prove. Since $F_0^-$ is an induced forest in $G$, $C$ must contain some vertex not in $F_0^-$.  Therefore $C$ contains a vertex in $B_G(V(C_i),r-d)$  for some $i\in\{1,\ldots,p\}$. 
    Recall that $Y_1$ was obtained from applying \cref{all_the_ys} and that $G[B_G(V(C_i),r-d)]-Y_1$ is unicyclic by \eqref{item:Y_1_makes_unicyclic}. Thus, if $G[B_G(V(C_i),r-d)]-Y_1$ contains any cycle then this cycle is $C_i$.  Therefore  $G[B_G(V(C_i),r-d)]-(Y_1\cup \{y_i\})$ is a forest.  Since $C$ contains no vertex in $Y_1\cup\{y_i\}\subseteq \widehat{Y}$, $C$ must also contain a vertex not in $B_G(V(C_i),r-d)$.

    Therefore, $C$ contains an edge $v_0v_1$ with $v_0\in B_G(V(C_i),r-d)$ and $v_1\not\in B_G(V(C_i),r-d)$. 
    Since $d\geq1$, $v_1\in B_G(V(C_i),r)$. 
    Now, if $v_1\in B_G(V(C_j),r)$ for some $j\in[p]\setminus\set{i}$, then 
    $v_1 \in B_G(V(C_j),r) \cap B_G(V(C_i),r)\subseteq Y_1\subseteq \widehat{Y}$ by \eqref{eq:ball_intersectionsin_Y_1}, a contradiction.
    Therefore, $v_1\in V(F_0^-)$, 
    so $v_0v_1$ is an $i$-exit edge.

    Consider the maximal path $v_1,\ldots,v_{h-1}$ in $C$ that contains $v_1$ and that is contained in $F_0^-$. Let $v_h$ be the neighbour of $v_{h-1}$ in $C-\set{v_{h-2}}$.  (Possibly $v_h=v_0$.)  Since $v_h\notin \widehat{Y}$ and $v_h$ is not a vertex of $F_0^-$, $v_h\in B_G(V(C_j),r-d)$ for some $j\in\{1,\ldots,p\}$.   Then $v_{h-1}v_h$ is a $j$-exit edge. 
    Since $F_0^-$ is a forest, $v_1,\ldots,v_{h-1}$ is the unique path between $v_1$ and $v_{h-1}$ in $F_0^-$, and it follows that 
    $v_0,\ldots,v_{h}$ is the extended forest leg of $W_t$ for the good tuple $t:=(v_0v_1,i,v_{h-1}v_h,j)$.   If $t$ is admissible, then there is nothing more to prove.  If $t$ is not admissible, then $B_G(V(W_t),d)$ contains a vertex of $\widehat{Y}$, say $m$. Let $P_0=v_1,\ldots,v_{h-1}$ be the forest leg of $W_t$, let $P_1$ be the first leg of $W_t$ and let $P_2$ be the second leg of $W_t$. If $m\in B_G(V(P_1),d)$ then there is $u\in V(P_1)$ such that $\dist_G(m,u)\leq d$ and so $\dist_G(m,v_0)\le \dist_G(m,u) + \dist_G(u,v_0)\le d + (r-d)$. If $m\in B_G(V(P_2),d)$ then $\dist_G(m,v_h)\le r$.  Similarly, if $m\in B_G(V(P_0),d)$ then $\dist_G(m,V(P_0))\le d$.  In each case $B_G(m,r)$ contains a vertex in $V(C)\supseteq \set{v_0,\ldots,v_h}$, a contradiction. 
  \end{clmproof}

  Let $t=(e,i,e',j)$ be an admissible tuple with $P_1eP_0e'P_2:=W_t$. 
  Recall that $p=|\mathcal{C}|$. 
  Define the $(p+1)$-tuple $\mathdefin{\Psi(t)}:=(\Psi_0(t),\Psi_1(t),\ldots,\Psi_p(t))$ where
  \begin{align*}
    \mathdefin{\Psi_\ell(t)} &:=
    \begin{cases}
      B_{G}(V(P_0),d) & \textrm{if $\ell=0$,}\\
      B_{G}(V(P_1),d) & \textrm{if $\ell=i$ and $\ell\neq j$,}\\
      B_{G}(V(P_2),d) & \textrm{if $\ell\neq i$ and $\ell= j$,}\\
      B_{G}(V(P_1),d)\cup B_{G}(V(P_2),d) & \textrm{if $\ell= i$ and $\ell= j$,}\\
      \emptyset & \textrm{if $\ell\notin\{0,i,j\}$.}
    \end{cases}
  \end{align*}

  For each $i\in[p]$, let
  \begin{align*}
    \mathdefin{F_i} & := G[B_G(V(C_i),r)]-\widehat{Y},\\
    \mathdefin{F_i^-} & := G[B_G(V(C_i),r-d)]-\widehat{Y}.
  \end{align*}
  Since $\widehat{Y}$ contains $Y_1$, $F_i$ is a subgraph of $U_i$.  Since $\widehat{Y}$ contains $y_i\in V(C_i)$, $F_i$ is a forest.  Since $F^-_i$ is a subgraph of $F_i$, it is also a forest.

  \begin{clm}\label{clm:three-components}
    Let $t$ be an admissible tuple. Then
    \begin{tightenum}
      \item\label[p]{item:Psi0-connected} $G[\Psi_0(t)]$ is a connected subgraph of $F_0$,
      \item\label[p]{item:Psil-connected} $G[\Psi_\ell(t)]$ is a subgraph of $F_\ell$ and has $c_{\ell}$ components where $c_{\ell}\in\set{0,1,2}$, for each $\ell\in[p]$.  Moreover, $\sum_{\ell\in[p]}c_{\ell}\leq 2$.
    \end{tightenum}
  \end{clm}

  \begin{clmproof}\
    Let $t:=(e,i,e',j)$. Let $P_0$, $P_1$, and $P_2$ be the forest leg, the first leg, and second leg of $W_t$, respectively.   For the proof of~\cref{item:Psi0-connected}, recall that $P_0$ is a connected subgraph of $F_0^-$. Since $t$ is admissible, $B_G(V(P_0),d)$ contains no vertex in $\widehat{Y}$. Since $P_0\subseteq F_0^-$, $V(P_0)$ contains no vertex of $B_G(V(C_i),r-d)$ for each $i\in[p]$. Hence, $B_G(V(P_0),d)$ contains no vertex in $B_G(V(C_i),r-2d)$ for any $i\in[p]$. Therefore $B_G(V(P_0),d)=B_{F_0}(V(P_0),d)$. Since $P_0$ is connected in $F_0$, $B_{F_0}(V(P_0),d)$ induces a connected subgraph of $F_0$.

    Now we prove~\cref{item:Psil-connected}. If $\ell\in [p]\setminus\{i,j\}$, then $\Psi_\ell(t)$ is empty, so $c_\ell=0$. 
    Assume thus that $\ell\in \{i,j\}$. 
    We prove that $B_G(V(P_1), d)$ induces a connected subgraph of $F_i$ and that $B_G(V(P_2),d)$ induces a connected subgraph of $F_j$.  
    By the definition of $\Psi_\ell(t)$, this implies that $c_\ell=1$ in case $i\neq j$, and $c_\ell \leq 2$ in case $i=j$. 
    We prove the first of these statements, the proof of the second is symmetric.

    Since $P_1$ contains no vertex in $Y_1\cup\{y_i\}$, $P_1$ is a connected subgraph of $F_i^-$.  Since $V(F^-_i)\subseteq B_G(V(C_i),r-d)$, $B_G(V(P_1),d)\subseteq B_G(V(C_i),r)$.  Furthermore, $B_G(V(P_1),d)$ contains no vertex in $\widehat{Y}$ as $t$ is admissible, so $B_G(V(P_1),d)=B_{F_i}(V(P_1),d)$. Since $P_1$ is connected in $F_i$, $B_{F_i}(V(P_1),d)$ induces a connected subgraph of $F_i$.
  \end{clmproof}

  \begin{clm}\label{w_distance}
    Let $t_1$ and $t_2$ be admissible tuples. If $\Psi_\ell(t_1)\cap \Psi_\ell(t_2)=\emptyset$ for all $\ell\in\{0,\ldots,p\}$, then $\dist_G(V(W_{t_1}),V(W_{t_2}))> d$.
  \end{clm}

  \begin{clmproof}
    Assume for the sake of contradiction that $\Psi_\ell(t_1)\cap\Psi_\ell(t_2)=\emptyset$ for all $\ell\in\{0,\ldots,p\}$ but there exist $v\in V(W_{t_1})$ and $w\in V(W_{t_2})$ with $\dist_G(v,w)=\dist_G(V(W_{t_1}),V(W_{t_2})) \le d$.  Let $t_2=(e,\ell,e',\ell')$ and let $Q_0$, $Q_1$, and $Q_2$ be the forest, first, and second legs of $W_{t_2}$, respectively.

    We first consider the case in which $v$ is in the forest leg of $W_{t_1}$ or $w$ is in the forest leg of $W_{t_2}$.   Without loss of generality, suppose $v$ is in the forest leg $P_0$ of $W_{t_1}$. Then $w\in B_G(v,d)\subseteq B_G(V(P_0),d)\subseteq\Psi_0(t_1)$.  Now we will show that $w\in \Psi_0(t_2)$.   Recall that $w$ lies in one of the three legs of $W_{t_2}$. If $w\in V(Q_0)$ then $w\in V(Q_0)\subseteq \Psi_0(t_2)$ as desired. Now assume $w$ lies in the first or second leg of $W_{t_2}$. Suppose without loss of the generality that $w\in V(Q_1)$. Let $x$ be the endpoint of the $\ell$-exit edge $e$ that lies in $Q_0$. 
    Observe that the ball $B_G(w,d)$ hits $F^-_0$, since it contains $v$. 
    By the definition of $Q_1$, this implies that $x\in B_G(w,d)$. 
    But then $w\in B_G(x,d)\subseteq\Psi_0(t_2)$, and therefore $\Psi_0(t_1)\cap\Psi_0(t_2)\supseteq\{w\}\supsetneq\emptyset$, a contradiction.

    Next we consider the case where $v$ is not in the forest leg of $W_{t_1}$ and $w$ is not in the forest leg of $W_{t_2}$.  Without loss of generality, suppose $v$ is in the first leg $P_1$ of $W_{t_1}$ and $w$ is in the first leg $Q_1$ of $W_{t_2}$. Recall that $t_2=(e,\ell,e',\ell')$, so $Q_1\subseteq F_\ell^-$. Since $\dist_G(v,w) \le d$ by assumption, 
    we then have $v\in B_G(w,d)\subseteq B_G(V(Q_1),d) \subseteq \Psi_\ell(t_2)$. Since $t_2$ is admissible and $w\in V(F^-_\ell)$ we have that $B_G(w,d) \subseteq V(F_\ell)$ and so $v\in V(F_\ell)$. Since $v\not\in V(F_0^-)$ and $v\not\in \widehat{Y}$, $v\in B_G(V(C_j),r-d)$ for some $j\in\{1,\ldots,p\}$. Since $v\notin B_G(V(C_\ell),r)\cap B_G(V(C_j),r)$ for any $j\neq \ell$, it must be that $j=\ell$. Thus $v\in V(F^-_\ell)$.  Hence $v\in\Psi_\ell(t_1)$. Therefore, $\Psi_\ell(t_1)\cap\Psi_\ell(t_2)\supseteq\{v\}\supsetneq \emptyset$, a contradiction.
  \end{clmproof}

  \begin{clm}\label{hungarians_hit}
    Let $X\subseteq V(G)$ and let $t:=(e,i,e',j)$ be an admissible tuple with extended forest leg $eP_0e'$ and such that $X\cap \Psi_\ell(t)\neq\emptyset$ for some $\ell\in\{0,\ldots,p\}$.  Then $B_G(X,r)\cap V(eP_0e')\neq\emptyset$.
  \end{clm}

  \begin{clmproof}
    Note that $\Psi_{\ell}(t)$ is non-empty only for the values of $\ell\in\set{0,i,j}$. If $\ell=0$ then, by assumption, $X$ contains a vertex in $B_G(V(P_0),d)=\Psi_0(t)$.  Therefore $B_G(X,d)\subseteq B_G(X,r)$ contains a vertex of $P_0$ and there is nothing more to prove. Otherwise, let $P_1$ be the first leg of $W_t$ and let $P_2$ be the second leg of $W_t$. If $\ell=i$ then $\dist_G(X,V(P_1))\le d$.  Therefore $B_G(X,d)$ contains a vertex of $P_1$.  Therefore $B_G(X,r)$ contains all vertices of $P_1$, including the endpoint of $e$ in $P_1$.  If $\ell=j$, then the same argument shows that $B_G(X,r)$ contains the endpoint of $e'$ in $P_2$.
  \end{clmproof}

  Let $\mathdefin{F^{\star}}$ be the forest obtained by taking disjoint copies $\mathdefin{F_0^\star,\ldots, F_p^\star}$ of $F_0,\ldots, F_p$, respectively.  That is,  $F_0^\star,\ldots, F_p^\star$ are pairwise vertex-disjoint and $F_i^\star$ is isomorphic to $F_i$, for each $i\in\{0,\ldots,p\}$. For each $i\in\{0,\ldots,p\}$ and each $v\in V(F_i)$, define $\mathdefin{v_i}$ to be the unique vertex of $F^\star_i$ that corresponds to $v$.

  Let $t$ be an admissible tuple. Let $\mathdefin{\Psi^\star(t)}:=\bigcup_{i=0}^p \{v_i:v\in \Psi_i(t)\}$. By \cref{clm:three-components}, $F^\star[\Psi^\star(t)]$ has at most three components. Let
  \[
    \mathdefin{\mathcal{A}}:=\{F^\star[\Psi^\star(t)]:\text{$t$ is an admissible tuple}\}.
  \]

  By~\cref{thm:gyarfas-lehel-general} applied for $\mathdefin{k^\star}:=k+\tfrac{1}{2}s(k)$, $c=3$, $F^\star$, and $\mathcal{A}$, at least one of the following is true:
  \begin{enumerate*}[label=(\alph*),ref=\alph*]
    \item\label[p]{gyarfas-packs} there are $k^\star$ pairwise vertex-disjoint members of $\mathcal{A}$; or
    \item\label[p]{gyarfas-hits} there exists $X^\star\subseteq V(F^\star)$ with $|X^\star|\leq \ell^\star(k^\star,3)$ and such that $X^\star \cap V(A)\neq\emptyset$ for each $A\in \mathcal{A}$.
  \end{enumerate*}

  First, we consider outcome \cref{gyarfas-hits} and let $X^\star$ be the promised subset of $V(F^\star)$. Define
  \[
    \mathdefin{X_3}:=\set{v\in V(G):v_i\in X^\star \textrm{ for some $i\in\set{0,\ldots,p}$}}.
  \]
  Note that for every admissible tuple $t$, $X^\star$ intersects $\Psi^\star(t)$, and therefore
  \begin{equation}\label{eq:X_H}
    X_3 \cap \Psi_i(t) \neq\emptyset,\ \textrm{for some $i\in\set{0,\ldots,p}$.}
  \end{equation}
  Let
  \[
  \mathdefin{X}:=\widehat{X} \cup X_3. 
  \]
  By~\eqref{x_prime_size}
  \[
    |X| \leq |\widehat{X}| + |X_3| < \textstyle 2k + 2k^2 + (\binom{k}{2}+k)\cdot s(k) +  \ell^\star(k+\tfrac{1}{2}s(k),3) = f(k).
  \]
  We now show that every cycle $C$ in $G$ contains a vertex of
  \[
    B_G(\widehat{X},19d)\cup B_G(X_3,r)\subseteq B_G(X,19d) = B_G(X,g(d)).
  \]
  Consider an arbitrary cycle $C$ in $G$. By~\cref{hit_cycle}, either $C$ contains a vertex in $B_G(\widehat{Y},r)$ or $C$ contains the extended forest leg of some admissible tuple $t$. In the former case, by \eqref{m_in_x_ball}, $\widehat{Y}\subseteq B_G(\widehat{X},13d)$ so $B_G(\widehat{Y},r)\subseteq B_G(\widehat{X},19d)$ and we are done. Therefore, we can assume that $C$ contains the extended forest leg of $W_t$ for some admissible tuple $t$. By~\eqref{eq:X_H} and~\cref{hungarians_hit}, $B_G(X_3,r)$ contains a vertex of $C$.

  Now we consider outcome \cref{gyarfas-packs}: there are $k^\star$ pairwise vertex-disjoint members $\mathdefin{A_1,\ldots,A_{k^\star}}$ of $\mathcal{A}$. For each $j\in[k^\star]$, let $\mathdefin{t_j}$ be an admissible tuple such that $A_j = F^\star[\Psi^\star(t_j)]$. Let $\mathdefin{T}:=\set{t_1,\ldots, t_{k^\star}}$. Therefore, for every $i\in\{0,\ldots,p\}$ and every $t,t'\in T$ with $t\neq t'$ we have $\Psi_i(t) \cap \Psi_i(t') = \emptyset$. By \cref{w_distance},
  \begin{equation}\label{eq:Wt-and-Wt-far-apart}
    \dist_G(V(W_{t}), V(W_{t'}))> d,
  \end{equation}
  for all $t,t'\in T$ with $t\neq t'$.

  Recall that, for each $t\in T$, $W_t$ is a path in $G$ or contains a cycle in $G$. Let $\mathdefin{J}:=\{t\in T:\text{$W_t$ contains a cycle}\}$.  By~\eqref{eq:Wt-and-Wt-far-apart} the cycles defined by $W_t$ for $t\in J$ define a $d$-packing of cycles in $G$.  Therefore $|J|<k$ and $|T\setminus J|> k^\star-k= \tfrac{1}{2}s(k)$.

  Let $\mathcal{C}'$ be the subset of $\{C_1,\ldots,C_p\}$ that contains each $C_i$ if and only if $V(C_i)$ contains an endpoint of $W_t$ for some $t\in T\setminus J$.    
  Define the graph $G':=\bigcup_{C\in\mathcal{C}'} C\cup \bigcup_{t\in T\setminus J} W_{t}$. 
  Recall that the cycles of $\{C_1,\ldots,C_p\}$ are vertex disjoint by choice, and that all $W_t$'s with $t\in T$ are vertex disjoint by \ref{eq:Wt-and-Wt-far-apart}. 
  Thus all vertices of $G'$ are of degree $2$ or $3$.  Since $G'$ does not contain $W_t$ for any $t\in J$, each degree $3$ vertex of $G'$ is an endpoint of $W_{t}$ for some $t\in T\setminus J$. Therefore, $G'$ contains $2|T\setminus J|\geq s(k)$ vertices of degree $3$. By \cref{thm:simonovits}, $G'$ contains a set $\mathcal{D}$ of $k$ pairwise vertex-disjoint cycles.  The degree-$3$ vertices of $G'$ partition the edges of each cycle in $\mathcal{D}$ into paths, where each such path is either contained in a cycle $C_i$ for some $i\in[p]$ or is a path $W_t$ for some $t\in T\setminus J$.

  We now show that $\mathcal{D}$ is a $d$-packing of cycles in $G$. Let $D$ and $D'$ be two distinct cycles in $\mathcal{D}$.  Let $P$ be a shortest path in $G$ from $V(D)$ to $V(D')$.  Let $v\in V(D)$ and $v'\in V(D')$ be the endpoints of $P$.  If $v\in W_t$ and $v'\in W_{t'}$ for some $t,t'\in T$ such that $W_t\subseteq D$ and $W_{t'}\subseteq D'$ then, since $D$ and $D'$ are vertex-disjoint we have $t\neq t'$, and by \eqref{eq:Wt-and-Wt-far-apart} we have $\dist_G(v,v')\ge \dist_G(V(W_t),V(W_{t'}))>d$, as desired.

  Therefore, without loss of generality we may assume that $v\in V(C_i)$ for some $i\in[p]$ and that $v\notin V(W_t)$ for any $t\in T$ with $W_t\subseteq D$.  Since $P$ is a shortest path between $V(D)$ and $V(D')$, we have $E(P)\cap E(D)=\emptyset$ and $E(P)\cap E(D')=\emptyset$.

  If the first edge of $P$ is contained in $C_i$ then the first edge of $P$ and the two edges of $D$ incident to $v$ are contained in $G'$.  Therefore $v$ is the endpoint of $W_t$ for some admissible tuple $t\in T\setminus J$ and such that $W_t\subseteq D$, which contradicts the choice of $v$.

  Suppose now that the first edge of $P$ is not contained in $C_i$.  We may assume that $V(P)\subseteq B_G(V(C_i),d)$ since otherwise $\len(P)>d$.  Since $V(P)\subseteq B_G(V(C_i),d)$ and $C_i$ is $d$-unicyclic, $v'\not\in V(C_i)$.  Since $\set{C_1,\ldots,C_p}$ is a $2d$-packing of cycles in $G$, $v'\not\in V(C_j)$ for any $j\in[p]$. Therefore $v'$ is in $W_{t'}$ for some admissible tuple $t'$ and $W_{t'}\subseteq D'$.

  Since $v'\in B_G(V(C_i),d)\subseteq B_G(V(C_i),r-d)$, $v'$ is not in the forest leg of $W_{t'}$. Assume, without loss of generality, that $v'$ is in the first leg $P_1'$ of $W_{t'}$. Since $t'$ is admissible and by~\eqref{eq:ball_intersectionsin_Y_1}, 
  $P_1'$ contains no vertex of $\bigcup_{j\in[p]\setminus \set{i}} (B_G(V(C_i),r)\cap B_G(V(C_j),r)$. Therefore, $P_1'$ connects a vertex of $V(C_i)$ with an $i$-exit edge.  Since $C_i$ is $d$-unicyclic and the first edge of $P$ is not an edge of $C_i$, $P_1'$ contains the first edge of $P$. Therefore, $P\subseteq P_1'\subseteq W_{t'}\subseteq D'$.  Since $v\in V(D)$ is incident to the first edge of $P$, this contradicts the fact that $D$ and $D'$ are vertex disjoint. This completes the proof that $\mathcal{D}$ is a $d$-packing in $G$. Since $|\mathcal{D}|\geq k$ this is the final contradiction that completes the proof of \cref{thm:the-big-ball-of-wax}.
\end{proof}

\section{Proof of \cref{thm:gyarfas-lehel-general}}
\label{sec:hungarians}

In this section, we present a complete proof of  \cref{thm:gyarfas-lehel-general} and show that, for any fixed $c$, the bounding function $\ell^\star(k,c)$ is polynomial in $k$.  All of the ideas from this proof come from \citet{gyarfas.lehel:helly}, who proved it when the host forest $F$ is a path and sketched changes needed for the general case.

Let $F$ be a forest. For each component of $F$, fix an arbitrary vertex of $F$. This makes $F$ a \defin{rooted forest}. For each component $C$ in an arbitrary subgraph $H$ of $F$, there is a unique vertex $v_C$ that is closest to a root of $F$. We call $v_C$ the \defin{root} of $C$ in $F$.  We treat $H$ as a rooted forest, where each component $C$ of $H$ is rooted at $v_C$. For two vertices $u$ and $v$ in $F$, if $v$ is contained in the unique path from $u$ to a root of $F$, then we say that $u$ is a \defin{descendant} of $v$ in $F$, and when additionally $u\neq v$  then $u$ is a \defin{strict descendant} of $v$ in $F$.  If $u$ is a strict descendant of $v$ and $uv\in E(F)$ then $u$ is a \defin{child} of $v$. For a vertex $v$ in $F$ the \defin{subtree}, $\mathdefin{F(v)}$, of $v$ is the subgraph of $F$ induced by all the descendants of $v$ in $F$ that is rooted at $v$. 
Let $X\subseteq V(F)$. 
We say that $x\in X$ is \defin{minimal} in $X$ with respect to $F$ if no strict descendant of $x$ lies in $X$. 


For an indexed family $A:=(A_i)_{i\in I}$ with index set $I$, we use $\mathdefin{\pi_i(A)}$ to denote the element $A_i$ of $A$ indexed by $i$.
Let $I$ be a set
and let $F_i$ be a forest for each $i\in I$.\footnote{In our application, the forests in $(F_i)_{i\in I}$ are pairwise vertex-disjoint, but this is not required in the following results.}
A sequence $(A_i)_{i\in I}$ is a \mathdefin{$(F_i)_{i\in I}$-tuple} if, for each $i\in I$, $A_i$ is a null graph or $A_i$ is a connected subgraph of $F_i$.  An $(F_i)_{i\in I}$-tuple $(A_i)_{i\in I}$ is \defin{trivial} if $A_i$ is a null graph for each $i\in I$ and is \defin{non-trivial} otherwise. Let $A=(A_i)_{i\in I}$ and $B=(B_i)_{i\in I}$ be two $(F_i)_{i\in I}$-tuples. We say that $A$ and $B$ are \defin{independent} if $V(A_i)\cap V(B_i)=\emptyset$ for each $i\in I$. A set $\mathcal{A}$ of $(F_i)_{i\in I}$-tuples is \defin{independent} if the elements of $\mathcal{A}$ are pairwise independent.

We make use of the following corollary, which is obtained immediately from \cref{cor:general-systems-lemma}, below, by  taking $m=k$ and $x_1=x_2=\cdots=x_k=1$.

\begin{cor}\label{cor:interface-for-systems-lemma}
  Let $c,k$ be positive integers. Let $F_i$ be a forest for each $i\in[c]$. For each $j\in [k]$, let $\mathcal{A}_j$ be a set of independent $(F_i)_{i\in[c]}$-tuples with $|\mathcal{A}_j|\geq k^c$. Then, for each $j\in[k]$ there exists $B_j \in \mathcal{A}_j$ such that $B_1,\ldots,B_k$ are pairwise independent.
\end{cor}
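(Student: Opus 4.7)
The plan is to deduce \cref{cor:interface-for-systems-lemma} from a stronger weighted ``general systems lemma'' and then specialize.  I would state and prove a result of the following form: for any positive integers $c,m$, any multiplicities $x_1,\ldots,x_m\in\mathbb{N}$, and any collections $\mathcal{A}_1,\ldots,\mathcal{A}_m$ of pairwise-independent $(F_i)_{i\in[c]}$-tuples whose sizes $|\mathcal{A}_j|$ exceed an appropriate product threshold depending on $c$ and the $x_j$'s, one can select $x_j$ members from each $\mathcal{A}_j$ so that the selected tuples form a pairwise-independent family.  Setting $m=k$ and $x_1=\cdots=x_k=1$ collapses the threshold to $k^c$ and recovers the corollary exactly as indicated in the hint.

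The general lemma would be proved by induction on the number of coordinates $c$.  The base case $c=0$ is trivial since the empty tuple is the only $(F_i)_{i\in\emptyset}$-tuple.  For the inductive step I would fix the last coordinate and examine the projections $\pi_c(A)$ for $A$ ranging over $\mathcal{A}_j$; by the pairwise-independence hypothesis, for each $j$ these form a family of pairwise vertex-disjoint connected subtrees of the rooted forest $F_c$.  Using the topological ordering of $F_c$ introduced in the appendix, I would greedily select subtrees starting from those whose roots appear earliest, exploiting the Helly-like uniqueness of paths in a forest to control how many members of each other $\mathcal{A}_{j'}$ are blocked by each selection.  After processing the $c$-th coordinate, the remaining coordinates $F_1,\ldots,F_{c-1}$ are handled by the inductive hypothesis applied to the surviving sub-collections, with a matching drop in the multiplicities.

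The main obstacle I anticipate is the quantitative accounting inside this Helly step.  A single connected subtree of a forest can, in general, intersect unboundedly many pairwise-disjoint subtrees (think of a long path cutting across many small subtrees), so the greedy step cannot rely on a naive intersection bound.  The fix is to organize the selection so that the blocked subtrees in each other collection lie in a controlled ancestor--descendant region, whose combinatorics can then be bounded in terms of the weights $x_j$.  Getting the threshold to multiply by exactly one factor per coordinate, rather than a larger blow-up, is the delicate combinatorial heart of the argument; the topological ordering and the subtree notation $F(v)$ introduced just before the corollary are precisely the machinery one needs to carry out the forest version of the argument that \citet{gyarfas.lehel:helly} treat in detail for paths and sketch for forests.
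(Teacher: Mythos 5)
Your proposal matches the paper's approach: both state a weighted generalization (with multiplicities $x_1,\ldots,x_m$ summing to $k$ and a threshold of $m^{c-1}k$ on each $|\mathcal{A}_j|$), prove it by induction on $(c,k)$, and specialize with $m=k$, $x_j=1$ to recover the corollary exactly as you indicate; the paper's \cref{cor:general-systems-lemma} is precisely the weighted lemma you sketch, and the one-coordinate step does use a topological ordering (equivalently, a deepest-root choice) just as you propose, after which the remaining $c-1$ coordinates are handled by induction.

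One point worth sharpening, since it resolves the obstacle you flag more cleanly than your ``controlled ancestor--descendant region'' suggests: the quantitative accounting in the one-coordinate step never needs to bound how many subtrees the selected subtree intersects. Choose the member $\check{A}$ whose first-coordinate root $v$ has no strict descendant that is the root of any other first coordinate. Since each $\mathcal{A}_j$ is pairwise independent, at most one member of $\mathcal{A}_j$ can contain $v$, and discarding that single member leaves only $A$'s for which $\pi_1(A)$ has its root outside $F_1(v)$ and avoids $v$; by connectedness such a $\pi_1(A)$ is entirely disjoint from $F_1(v)\supseteq\pi_1(\check{A})$. So each selection costs at most one candidate from each collection, which is exactly why the threshold drops by one per selection and multiplies by exactly $m$ per coordinate, with no blow-up. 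Your ``long path cutting across many small subtrees'' worry is therefore moot: those small subtrees are all in the same independent collection as the long path, so they cannot all be candidates simultaneously; the only thing that needs counting is containment of $v$, not intersection with the chosen subtree.
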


To establish \cref{cor:interface-for-systems-lemma} we prove the following more general result, which is amenable to a proof by induction.

\begin{lem}\label{cor:general-systems-lemma}
Let $c,m,k$ be positive integers. Let $F_i$ be a forest for each $i\in[c]$. For each $j\in [m]$, let $\mathcal{A}_j$ be a set of independent $(F_i)_{i\in[c]}$-tuples with $|\mathcal{A}_j|\geq m^{c-1}k$. Let $x_1,\ldots,x_m$ be nonnegative integers with $x_1+\cdots+x_m=k$. Then, for each $j\in[m]$ there exist $A_j^{1},\ldots,A_j^{x_j}\in \mathcal{A}_j$ such that
\begin{equation}
  A_1^{1},\ldots,A_1^{x_1},A_2^{1},\ldots,A_2^{x_2},\ldots,A_m^{1},\ldots,A_m^{x_m}  \label{general-systems-output}
\end{equation}
are pairwise independent.
\end{lem}

\begin{proof}
  We call a tuple $(c,m,k,(F_i)_{i\in[c]},(\mathcal{A}_j)_{j\in[m]},(x_j)_{j\in[m]})$ that satisfies the assumptions of the lemma a \defin{$(c,m,k)$-instance} and we say that the sequence of sets in \eqref{general-systems-output} \defin{satisfies} this $(c,m,k)$-instance.

  Without loss of generality, we may assume that $x_j>0$ for each $j\in[m]$.  To see why this is so, let $J:=\{j\in[m]:x_j>0\}$ and $m':=|J|$. Since $m'\le m$, $\mathcal{I}':=(c,m',k,(F_i)_{i\in[c]},(\mathcal{A}_j)_{j\in J},(x_j)_{j\in J})$ is a $(c,m',k)$-instance with $x_j>0$ for each $j\in J$, and any sequence $(A_{j}^\ell)_{j\in I,\, \ell\in [x_j]}$ that satisfies this instance also satisfies \eqref{general-systems-output}.  From this point on we assume that $x_j>0$ for each $j\in[m]$.

  Without loss of generality, we assume that for all $j\in[m]$, $i\in[c]$, $A\in\mathcal{A}_j$, the tree $\pi_i(A)$ is non-null. Indeed, otherwise for all $j$, $i$, $A$ violating this rule, we introduce a new vertex in $F_i$ and make $\pi_i(A)$ be the graph that contains only this new vertex.  Since this new vertex appears only in $\pi_i(A)$, the set $\mathcal{A}_j$ is still independent.  Because $\mathcal{A}_j$ is independent for each $j\in[m]$, this assumption ensures that $\pi_i(A)\neq\pi_i(A')$ for each $i\in[c]$, $j\in[m]$ and distinct $A,A'\in \mathcal{A}_j$. This ensures that, for every non-empty $I\subseteq[c]$, every $j\in[m]$, and every $\tilde{\mathcal{A}}_j\subseteq\mathcal{A}_j$, we have $|\{(\pi_i(A))_{i\in I}: A\in\tilde{\mathcal{A}}_j\}|=|\tilde{\mathcal{A}}_j|$. In the following we will use this fact repeatedly without explicitly referring to it.

  The assumption above, that for all $j\in[m]$, $i\in[c]$, $A\in\mathcal{A}_j$, the tree $\pi_i(A)$ is non-null, has another consequence, which we explain now. 
  Let $I$ be a non-empty subset of $[c]$, 
  for each $j\in[m]$ let $\mathcal{B}_j$ be obtained from a subset of $\mathcal{A}_j$ by restricting the tuples to the coordinates in $I$. 
  Let $k'$ be a positive integer and let $x'_1, \dots, x'_m$  be nonnegative integers with $x'_1 + \dots + x'_m=k'$. 
  Let $\mathcal{I}_{\mathcal{B}}:=(|I|,m,k',(F_i)_{i\in I},(\mathcal{B}_j)_{j\in[m]},(x'_j)_{j\in[m]})$ be a $(|I|,m,k')$-instance.  Then, for each $B\in\bigcup_{j\in[m]}\mathcal{B}_j$, the $(F_i)_{i\in I}$-tuple $(\pi_i(B))_{i\in I}$ is non-trivial.  Any two non-trivial independent $(F_i)_{i\in I}$-tuples are necessarily distinct.  It follows that any sequence that satisfies the instance $\mathcal{I}_{\mathcal{B}}$ is a sequence of $k'$ \emph{distinct} $(F_i)_{i\in I}$-tuples.  
  (Note that this would not necessarily be true if trivial tuples could appear.) 
  Treating this sequence as a set does not change its size.  Similarly, if there exist sets $\mathcal{B}_1',\ldots,\mathcal{B}_m'$ with $\mathcal{B}_j'\subseteq \mathcal{B}_j$ and $|\mathcal{B}_j'|=x_j'$ for each $j\in[m]$ and such that $\bigcup_{j\in[m]}\mathcal{B}_j'$ is independent and has size $k'$ then there exists a sequence that satisfies the instance $\mathcal{I}_{\mathcal{B}}$.  In this case, we say that $\mathcal{B}_1',\ldots,\mathcal{B}_m'$ \defin{satisfies} $\mathcal{I}_{\mathcal{B}}$.

  With these two assumptions justified, we now proceed with the proof.  Let $\mathcal{I}:=(c,m,k,(F_i)_{i\in[c]},(\mathcal{A}_j)_{j\in[m]},(x_j)_{j\in[m]})$ be a $(c,m,k)$-instance satisfying the assumptions in the previous paragraphs.  Our goal is to show the existence of a sequence of sets $\mathcal{A}'_1,\ldots,\mathcal{A}'_m$ that satisfies the instance $\mathcal{I}$.  We proceed by induction on $(c,k)$ in lexicographic order.

  Suppose first that $(c,k)=(1,1)$. Then $m=1$ and $|\mathcal{A}_1|\ge m^{c-1} k=1$.  Let $\mathcal{A}'_1$ be a $1$-element subset of $\mathcal{A}_1$.  Then the one element sequence $\mathcal{A}'_1$ satisfies $\mathcal{I}$.

  Now suppose that $c=1$ and $k>1$. For each $i\in[c]$, fix an arbitrary vertex in each component of $F_i$ to be a root. Let $\check{A}$ be an element of $\bigcup_{j\in[m]}\mathcal{A}_j$ such that the root, $v$, of $\pi_1(\check{A})$ has no strict descendant that is a root of $\pi_1(A)$ for any $A\in\bigcup_{j\in[m]}\mathcal{A}_j$. Let $\ell\in[m]$ be such that $\check{A}\in\mathcal{A}_{\ell}$.

  For each $j\in[m]$, let $\mathcal{B}_j:=\set{A\in\mathcal{A}_j\mid v\not\in V(\pi_1(A))}$. Since $\mathcal{A}_j$ is independent, there is at most one $A\in\mathcal{A}_j$ such that $\pi_1(A)$ contains $v$, and therefore,
  \begin{equation}
    |\mathcal{B}_j|\geq |\mathcal{A}_j|-1\geq k-1,\ \textrm{for each $j\in[m]$}. \label{eq:A-greater-k-1}
  \end{equation}
  Note that
  \begin{enumerate*}[label=(\arabic*)]
    \item $\pi_1(\check{A})\subseteq F_1(v)$,
    \item there is no $A\in\mathcal{B}_j$ such that the root of $\pi_1(A)$ is in $F_1(v)$, and
    \item for every $A\in\mathcal{B}_j$ we have $v\notin\pi_1(A)$.
  \end{enumerate*}
  We conclude that $\pi_1(B)$ and $\pi_1(\check{A})$ are vertex-disjoint for each $j\in[m]$ and each $B\in\mathcal{B}_j$.

  Define $y_j:=x_j$ for each $j\in[m]\setminus\set{\ell}$ and $y_\ell:=x_{\ell}-1$. Now, \eqref{eq:A-greater-k-1} implies that $\mathcal{I}_\mathcal{B}:=(1,m,k-1,(F_i)_{i\in[c]},(\mathcal{B}_j)_{j\in[m]},(y_j)_{j\in[m]})$ is a $(1,m,k-1)$-instance.  By the inductive hypothesis, there exists $\mathcal{B}_1',\ldots,\mathcal{B}_m'$ that satisfy $\mathcal{I}_\mathcal{B}$. For each $j\in[m]\setminus\{\ell\}$, let $\mathcal{A}'_j:=\mathcal{B}'_j$ and let $\mathcal{A}'_\ell:= \{\check{A}\} \cup \mathcal{B}'_\ell$. Then the sequence $\mathcal{A}'_1,\ldots,\mathcal{A}'_m$ satisfies $\mathcal{I}$. This completes the proof of the case $c=1$.

  Now suppose that $c>1$. Let $\mathcal{B}_j=\set{\pi_1(A)\mid A\in\mathcal{A}_j}$ and $y_j=m^{c-2}k$ for each $j\in[m]$. Observe that $|\mathcal{B}_j|=|\mathcal{A}_j|\geq m^{c-1}k$ for each $j\in[m]$. Therefore,
  \[
    \mathcal{I}_{\mathcal{B}}:=(1,m,m^{c-1}k,(F_1),(\mathcal{B}_j)_{j\in[m]},(y_j)_{j\in[m]})
  \]
  is a $(1,m,m^{c-1}k)$-instance.  Applying induction on $\mathcal{I}_\mathcal{B}$ we obtain a sequence of sets $\mathcal{B}_1',\ldots,\mathcal{B}_m'$ that satisfies the instance $\mathcal{I}_{\mathcal{B}}$.

  For each $j\in[m]$, let $\mathcal{C}_j=\set{(\pi_2(A),\ldots,\pi_c(A))\mid A\in\mathcal{A}_j, \pi_1(A)\in \mathcal{B}'_j}$. Observe that $|\mathcal{C}_j|=|\mathcal{B}'_j|=y_j=m^{c-2}k$.  Therefore, the tuple $\mathcal{I}_{\mathcal{C}}:=(c-1,m,k,(F_i)_{i\in\{2,\ldots,c\}},(\mathcal{C}_j)_{j\in[m]},(x_j)_{j\in[m]})$ is a $(c-1,m,k)$-instance.  Applying induction yields $\mathcal{C}_1',\ldots,\mathcal{C}_m'$ that satisfies the instance $\mathcal{I}_{\mathcal{C}}$.

  For each $j\in[m]$, define $\mathcal{A}'_j:=\bigcup_{C\in\mathcal{C}'_j}\{A\in\mathcal{A}_j\mid (\pi_2(A),\ldots,\pi_c(A))=C\}$.  Since $\mathcal{C}'_j\subseteq\mathcal{C}_j$ is independent,  $|\mathcal{A}'_j|=|\mathcal{C}'_j|=x_j$. Furthermore, $|\bigcup_{j\in[m]}\mathcal{A}'_j|=|\bigcup_{j\in[m]}\mathcal{C}'_j|=k$. All that remains is to show that $\bigcup_{j\in[m]}\mathcal{A}'_j$ is independent. For any two distinct $A,A'\in\bigcup_{j\in[m]}\mathcal{A}'_j$, the graphs $\pi_1(A)$ and $\pi_1(A')$ are vertex disjoint because $(\pi_1(A))$ and $(\pi_1(A'))$ each appear in $\bigcup_{j\in[m]}\mathcal{B}'_j$.  For each $i\in\{2,\ldots,c\}$, $\pi_i(A)$ and $\pi_i(A')$ are vertex disjoint because $(\pi_2(A),\ldots,\pi_c(A))$ and $(\pi_2(A'),\ldots,\pi_c(A'))$ each appear in $\bigcup_{j\in[m]}\mathcal{C}'_j$.  Thus, $\bigcup_{j\in[m]}\mathcal{A}'_j$ is independent, so $\mathcal{A}_1,\ldots,\mathcal{A}'_m$ satisfies $\mathcal{I}$.  This completes the proof of the lemma.
\end{proof}

\begin{lem}\label{lem:hungarians-Fi-tuples}
There exists a function $\ell:\mathbb{N}^2\to\mathbb{N}$ such that, for all $k,c\in\N$ with $k,c\ge 1$, for every $(F_i)_{i\in[c]}$ where $F_i$ is a forest for each $i\in[c]$, for every set $\mathcal{A}$ of non-trivial $(F_i)_{i\in[c]}$-tuples either
   \begin{tightenum}
     \item\label[p]{item:hungarian-support:independent-set} there are $k$ members of $\mathcal{A}$ that form an independent set; or
     \item\label[p]{item:hungarian-support:hitting-set} there exists a sequence of sets $(X_i)_{i\in[c]}$ with $X_i \subseteq V(F_i)$ for each $i\in[c]$, $\sum_{i\in[c]}|X_i|\leq \ell(k,c)$  and such that for each  $(A_1,\ldots,A_c)\in\mathcal{A}$,
     there exists $i\in[c]$ with
     $X_i\cap V(A_i)\neq\emptyset$.
   \end{tightenum}
\end{lem}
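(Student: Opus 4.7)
I would prove \cref{lem:hungarians-Fi-tuples} by induction on $c$. The base case $c=1$ is the classical Gyárfás--Lehel theorem for subtrees of a forest, which follows from the Helly property of subtrees of a tree (giving $\tau=\nu$ and hence one may take $\ell(k,1)=k-1$).

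For the inductive step with $c\ge 2$, given $\mathcal{A}$ containing no $k$ pairwise-independent tuples, I would project onto the $c$-th coordinate, $\mathcal{B}_c:=\{\pi_c(A):A\in\mathcal{A},\,\pi_c(A)\neq\emptyset\}$, and apply the base case to $\mathcal{B}_c$ with a parameter $M$ large enough that \cref{cor:interface-for-systems-lemma} can be invoked downstream (roughly $M\geq k^{c-1}$ times suitable factors). Either $\mathcal{B}_c$ admits a hitting set $X_c$ of size at most $M-1$, in which case $X_c$ hits every tuple with non-null $\pi_c$ and the inductive hypothesis applied to the remaining tuples with $\pi_c(A)=\emptyset$ (viewed as $(F_i)_{i\in[c-1]}$-tuples) yields either $k$ pairwise-independent members (also pairwise-independent in $\mathcal{A}$) or a hitting set of size $\ell(k,c-1)$ which combines with $X_c$; or $\mathcal{B}_c$ contains $M$ pairwise-disjoint subtrees $T_1,\dots,T_M$, taken maximal.

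In the latter case, for each $j\in[M]$ set $\mathcal{A}_j:=\{A\in\mathcal{A}:\pi_c(A)=T_j\}$ and let $\mathcal{C}_j$ be the family of tails $(\pi_1(A),\dots,\pi_{c-1}(A))$ for $A\in\mathcal{A}_j$. Applying the inductive hypothesis to each $\mathcal{C}_j$ with parameter $K:=k^{c-1}$ classifies each lane as \emph{fat} (the lemma returns $K$ pairwise-independent tails) or \emph{thin} (a hitting set of size $\ell(K,c-1)$). If at least $k$ lanes are fat, \cref{cor:interface-for-systems-lemma} selects one tuple from each of $k$ such lanes whose tails are pairwise independent; since the corresponding $T_j$'s are pairwise disjoint, the chosen tuples are pairwise independent in $\mathcal{A}$. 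Otherwise fewer than $k$ lanes are fat: each fat lane is handled by adding a single vertex of $T_j$ to $X_c$ (hitting all of $\mathcal{A}_j$ through coordinate $c$), and the thin-lane hitting sets combine to cover the remaining tuples in $\bigcup_j\mathcal{A}_j$.

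The main obstacle will be covering tuples $A$ with $\pi_c(A)\in\mathcal{B}_c\setminus\{T_1,\dots,T_M\}$: maximality of $\{T_j\}$ forces $\pi_c(A)$ to meet some $T_j$, but $\pi_c(A)$ may extend strictly beyond $T_j$, so the natural ``lane by earliest $T_j$ met'' refinement does not give coordinate-$c$ independence when feeding the Systems Lemma. My plan is to handle this by iterating the whole dichotomy on a shrinking residual family; at each round the Helly property of subtrees of $F_c$ forces the maximum pairwise-disjoint number in the residual $\mathcal{B}_c$ to strictly decrease, so the iteration terminates. The resulting recursion of the form $\ell(k,c)=k\cdot\ell(k^{c-1},c-1)+\Oh(k^c)$ is polynomial in $k$ of degree growing with $c$, consistent with the $\Oh(k^{18})$ bound on $\ell^\star(k,3)$ claimed in the introduction.
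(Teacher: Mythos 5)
Your overall skeleton (induct on $c$, project out one coordinate, apply the induction to the "tails", then glue with \cref{cor:interface-for-systems-lemma}) matches the paper's proof strategy, and the $c=1$ base via the Helly property of subtrees is fine. But there is a genuine gap in the inductive step, exactly at the "main obstacle" you flag, and the fix you propose does not work.

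The lanes $\mathcal{A}_j = \{A \in\mathcal{A} : \pi_c(A) = T_j\}$ only account for tuples whose last coordinate \emph{equals} one of the selected disjoint subtrees; maximality of $\{T_1,\dots,T_M\}$ forces every remaining $\pi_c(A)$ to \emph{meet} some $T_j$ but says nothing stronger, and these tuples belong to no lane. Your repair is to iterate the dichotomy on the residual family and argue that "the Helly property forces the maximum pairwise-disjoint number in the residual $\mathcal{B}_c$ to strictly decrease." That is false: removing the specific elements $T_1,\dots,T_M$ from $\mathcal{B}_c$ does not reduce its packing number, since $\mathcal{B}_c$ may contain another, entirely different pairwise-disjoint family of the same size (e.g.\ slight enlargements $T_j'\supsetneq T_j$ that are still pairwise disjoint and none of which equals any $T_i$). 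Helly is a statement about pairwise-\emph{intersecting} families and gives you nothing about how the packing number changes when you delete particular members. So the iteration has no termination guarantee, and even if it did, the recursion you write down, $\ell(k,c) = k\cdot\ell(k^{c-1},c-1)+\Oh(k^c)$, ignores the (potentially unbounded) number of rounds.

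The paper avoids this problem by not working with a maximal disjoint subfamily of $\mathcal{B}_1$ at all. Instead it exploits the tree structure of $F_1$ directly: it fixes a topological order of $V(F_1)$, greedily picks vertices $v_1 < \dots < v_m$ so that, after deleting $v_1,\dots,v_{z-1}$, the subtree rooted at $v_z$ is the earliest one whose contained tuples have a large independent family of tails; the subtrees $F_{(z)}(v_z)$ are then automatically pairwise disjoint. The dichotomy is on $m$: if $m\ge k$, feed the tails to \cref{cor:interface-for-systems-lemma}; if $m<k$, set $X_1 := \{v_1,\dots,v_m\}$ and note that \emph{every} tuple either has $\pi_1(A)$ hitting $X_1$ or has $\pi_1(A)$ wholly inside one of the boundary subtrees $F_{(z)}(u)$ (for $u$ a child of $v_z$, or $u=r_1$), each of which by minimality of $v_z$ has a small packing number in its tails — so one application of the induction with parameter $2k^c$ covers all residual tuples at once. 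In short, because the "lanes" in the paper are defined by \emph{containment in a subtree of $F_1$} rather than by \emph{equality to a selected subtree}, there is no leftover residual to iterate over, and the whole thing closes in a single pass with a clean polynomial recursion. Your approach would need a different device (e.g.\ indexing lanes by subtrees rather than by members of $\mathcal{B}_c$) to cover all of $\mathcal{A}$; as written it does not.
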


\begin{proof}
  We will show that the lemma holds with 
  \[
  \ell(k, c):= c\cdot \left(\prod_{j=1}^{c-1}2^{j!}\right)\, k^{c!}. 
  \]
  In order to help the induction go through, we prove the lemma with the following slightly more precise property in place of \cref{item:hungarian-support:hitting-set}:   
\begin{tightenum}[label={\normalfont (b')}]
     \item\label[p]{item:hungarian-support:hitting-set_precise} 
     there exists a sequence of sets $(X_i)_{i\in[c]}$ with $X_i \subseteq V(F_i)$ and      
   $|X_i|\le \left(\prod_{j=1}^{c-1}2^{j!}\right)\, k^{c!}$
     for each $i\in[c]$, 
     and such that for each  $(A_1,\ldots,A_c)\in\mathcal{A}$,
     there exists $i\in[c]$ with
     $X_i\cap V(A_i)\neq\emptyset$.
   \end{tightenum}
  Note that this implies that  $\sum_{i\in[c]}|X_i|\le c\cdot (\prod_{j=1}^{c-1}2^{j!})\, k^{c!} = \ell(k, c)$, so \cref{item:hungarian-support:hitting-set} follows from \ref{item:hungarian-support:hitting-set_precise}.  
  
  We begin with several assumptions, each without loss of generality, about $(F_i)_{i\in[c]}$ and $\mathcal{A}$.

  We may assume that $\pi_i(A)$ contains at least one vertex, for each $A\in\mathcal{A}$ and each $i\in[c]$. Indeed, otherwise we introduce a new vertex in $F_i$ and make $\pi_i(A)$ be the graph that contains only this new vertex. Since this vertex appears only in $\pi_i(A)$, this does not affect outcome \cref{item:hungarian-support:independent-set}. If this new vertex appears in the set $X_i$ promised by outcome \ref{item:hungarian-support:hitting-set_precise}, then we can remove it from $X_i$ and replace it with any other vertex in $\bigcup_{j\in[c]\setminus\{i\}}V(\pi_j(A))$.  (The fact that $A$ is non-trivial for each $A\in\mathcal{A}$ ensures that $\bigcup_{j\in[c]\setminus\{i\}}V(\pi_j(A))$ contains at least one vertex.)  This assumption ensures that for any non-empty $I\subseteq[c]$, and any $A\in\mathcal{A}$, $(\pi_i(A))_{i\in I}$ is non-trivial.

  We may assume that $F_i$ is a tree, for each $i\in[c]$.  We do this by introducing a new vertex $r_i$ adjacent to one vertex in each component of $F_i$.  We treat $F_i$ as a rooted tree rooted at $r_i$.

  Finally, we claim that we may assume that for each $i\in[c]$, $F_i$ is a binary tree,\footnote{A \defin{binary tree} is a rooted tree in which each vertex has at most two children. The root of a binary tree is not considered to be a leaf, even if it has no children.}, thanks to the following argument.
  For each $i\in[c]$, and each vertex $v$ in $F_i$ with children $u_1,\ldots,u_\beta$, define $T_i(v)$ to be a binary tree rooted at $v$ and whose leaves are $u_1,\ldots,u_\beta$, and let $T^-_i(v):=T_i(v)-\{u_1,\ldots,u_\beta\}$. 
  As expected, we choose these binary trees so that every two are vertex-disjoint except possibly for the one common vertex imposed by the definition; that is, given two distinct vertices $v,w$ in $F_i$, if $vw$ is not an edge in $F_i$ then $T_i(v)$ and $T_i(w)$ are vertex-disjoint, and if $v$ is the parent of $w$ then their only common vertex is $w$, which is then a leaf of $T_i(v)$.    
  Now let $F_i':=\bigcup_{v\in V(F_i)} T_i(v)$ and note that $F_i'$ is a binary tree for each $i\in[c]$.
  For each $A\in\mathcal{A}$, define $A':=(A'_1,\ldots,A'_c)$ where $A'_i:=F_i'[\bigcup_{v\in V(\pi_i(A))} V(T^-_i(v))]$ for each $i\in [c]$ and define $\mathcal{A}':=\{A'\mid A\in\mathcal{A}\}$.  Then $\mathcal{A}'$ is a set of non-trivial $(F_i')_{i\in[c]}$-tuples. Observe that for all $A,B\in\mathcal{A}$,
  \begin{enumerate*}[label=(\arabic*),ref=\arabic*]
    \item\label[p]{preserves-independence} $A$ and $B$ are independent if and only if $A'$ and $B'$ are independent;
    \item\label[p]{preserves-hitting} if $A_i'$ contains a vertex in $T_i^-(v)$ for some $i\in[c]$ and $v\in V(F_i)$, then $A_i$ contains $v$.
  \end{enumerate*} 
  Now, if we know that \cref{lem:hungarians-Fi-tuples} holds in the special case where the forests are binary trees, we may proceed as follows:  We apply the lemma to $(F'_i)_{i\in[c]}$ and $\mathcal{A}'$.  If this results in an independent set $I'\subseteq\mathcal{A}'$ of size $k$, then \cref{preserves-independence} ensures that $I:=\{A\in\mathcal{A}\mid A'\in I'\}$ satisfies \cref{item:hungarian-support:independent-set}.  If this results in sets $X_1',\ldots,X_c'$, then define $X_i:=\{v\in V(F_i)\mid V(T_i^-(v))\cap X_i'\neq\emptyset\}$ for each $i\in[c]$.  Then \cref{preserves-hitting} ensures that the sets $X_1,\ldots,X_c$ satisfy \ref{item:hungarian-support:hitting-set_precise}. 
  Therefore, to prove \cref{lem:hungarians-Fi-tuples} in full, it is enough to prove it in the case of binary trees, as claimed. 
  Assume thus from now on that $F_i$ is a binary tree for each $i\in[c]$. 

  We now proceed by induction on $c$.  In order to avoid presenting an argument for $c=1$ that is a special case of the argument for $c>1$, we treat $c=0$ as the base case of the induction.  In this case, the conclusion of the lemma is trivial since the empty sequence satisfies the conditions of outcome \ref{item:hungarian-support:hitting-set_precise}.

  Now suppose that $c\ge 1$. 
  Consider the following iterative process, that constructs a subset of vertices $\set{v_1,\ldots,v_m}$ of $F_1$. 
  Suppose that $v_1,\ldots,v_{z-1}$ 
  have already been defined, for some $z\ge 1$. 
  If $r_1\in\set{v_1,\ldots,v_{z-1}}$, then we set 
  $m:=z-1$ and stop the process. 
  Otherwise, define  
\[
F_{(z)}=F_1-\set{v_1,\ldots,v_{z-1}}.
\]
Recall that $F_{(z)}$ is a rooted forest and that, for each vertex $v$ of that forest, $F_{(z)}(v)$ denotes the subgraph of $F_{(z)}$ induced by all the descendants of $v$ in $F_{(z)}$ that is rooted at $v$. 
Now for each vertex $v$ of $F_{(z)}(r_1)$ let 
      \begin{align}
      \mathcal{B}_{z}(v)&=\set{A\in \mathcal{A}\mid \pi_1(A)\subseteq F_{(z)}(v)},\notag\\ 
      \mathcal{C}_{z}(v)&=\set{(\pi_2(A),\ldots,\pi_c(A))\mid A\in\mathcal{B}_{z}(v)}.\notag
  \end{align}
If $c=1$ and $\mathcal{B}_z(r_1)=\emptyset$, then we set 
$m:=z-1$ and stop the process.
If $c\geq2$ and $\mathcal{C}_z(r_1)$ contains no independent set of size $k^{c-1}$, then we set $m:=z-1$ and stop the process.
Otherwise, we define $v_z$ to be a minimal vertex in $F_{(z)}(r_1)$ that satisfies one of the following properties:
  \begin{enumerate}
      \item $c=1$ and $\mathcal{B}_{z}(v) \neq \emptyset$; 
      \item $c\geq 2$ and $\mathcal{C}_{z}(v)$ contains an independent set of size $k^{c-1}$.
  \end{enumerate}
This completes the definition of the process.

  We consider two separate cases:
  \begin{enumerate*}[label=(\arabic*)]
    \item $m\geq k$, in which case we show that $\mathcal{A}$ contains an independent set of size $k$, so \eqref{item:hungarian-support:independent-set} holds;
    \item $m<k$, in which case we show that there exist $X_1,\ldots,X_c$ that satisfy \eqref{item:hungarian-support:hitting-set_precise}. 
  \end{enumerate*}

  Suppose first that $m\geq k$ so that $v_1,\ldots,v_k$ is well-defined.  For each $z\in[k]$, define $\mathcal{B}_{z}:=\mathcal{B}_{z}(v_z)$. Consider $\alpha,\beta\in[k]$ with $\alpha<\beta$. 
  Note that $v_{\alpha}$ is not a vertex of $F_{(\beta)}$ and therefore 
  \begin{equation}
    V(F_{(\alpha)}(v_{\alpha})) \cap V(F_{(\beta)}(v_{\beta})) = \emptyset.
    \label{eq:F-i-1-disjoint-from-F-j-1}
  \end{equation}
  Recall that for all $A\in \mathcal{B}_{\alpha}$, $B\in\mathcal{B}_{\beta}$, we have that $\pi_1(A)\subseteq F_{(\alpha)}(v_{\alpha})$,  $\pi_1(B)\subseteq F_{(\beta)}(v_{\beta})$. Therefore by~\eqref{eq:F-i-1-disjoint-from-F-j-1},
  \begin{equation}
    V(\pi_1(A)) \cap V(\pi_1(B))=\emptyset, \textrm{ for all $A\in\mathcal{B}_\alpha$, $B\in\mathcal{B}_\beta$.}
    \label{eq:B-disjoin-from-B'}
  \end{equation}
  If $c=1$, then by the definition of the process $\mathcal{B}_z$ is non-empty for each $z\in[k]$. 
  Taking one element in $\mathcal{B}_z$ for each $z\in[k]$ yields an independent set (by~\eqref{eq:B-disjoin-from-B'}) of size $k$, which establishes \cref{item:hungarian-support:independent-set}.

  Now suppose $c\ge 2$. 
  For each $z\in[k]$, we fix an independent set $\mathcal{D}_z$ of $(F_i)_{i\in\set{2,\ldots,c}}$-tuples 
  contained in $\mathcal{C}_z(v_z)$
  with $|\mathcal{D}_{z}|\geq k^{c-1}$. By \cref{cor:interface-for-systems-lemma}, for each $z\in[k]$ there exists $D_z\in\mathcal{D}_z$ such that $\set{D_1,\ldots,D_k}$ is independent.
  For each $z\in[k]$, select some $A_z\in\mathcal{B}_z$ such that $(\pi_2(A_z),\ldots,\pi_c(A_z))=D_z$.   We now show that $\set{A_1,\ldots,A_k}$ is an independent set of $(F_i)_{i\in[c]}$-tuples. To this end, let $A:=A_\alpha$ and $B:=A_\beta$ for some $\alpha,\beta\in[k]$ with $\alpha < \beta$.  We must show that $V(\pi_i(A))\cap V(\pi_i(B))=\emptyset$, for each $i\in[c]$. Since $A\in\mathcal{B}_{\alpha}$ and $B\in\mathcal{B}_{\beta}$, by~\eqref{eq:B-disjoin-from-B'} we have that $V(\pi_1(A))\cap V(\pi_1(B))=\emptyset$. Since $(\pi_2(A),\ldots,\pi_c(A))=D_{\alpha}$ and $(\pi_2(B),\ldots,\pi_c(B))=D_{\beta}$, and since $D_\alpha$ and $D_\beta$ are independent, we conclude that $V(\pi_i(A)) \cap V(\pi_i(B))=\emptyset$, for each $i\in\{2,\ldots,c\}$. This proves that $\set{A_1,\ldots,A_k}\subseteq\mathcal{A}$ is an independent set, so~\eqref{item:hungarian-support:independent-set} holds.

  Now we consider the case in which $m<k$. 
  Let $X_1:=\{v_1,\ldots,v_m\}$.  
  First suppose that $c=1$. 
  Note that $|X_1|=m\le k = (\prod_{j=1}^{c-1}2^{j!})\, k^{c!}$ in this case.   
  We claim that $X_1$ satisfies \ref{item:hungarian-support:hitting-set_precise}.
  Suppose the opposite. 
  Then there exists some $A\in \mathcal{A}$ with $X_1 \cap V(\pi_1(A))=\emptyset$. 
  Let $v$ be the root of $\pi_1(A)$. 
  The proof splits into two cases depending on whether $\set{v_1,\ldots,v_m}$ contains an ancestor of $v$ in $F_1$ or not.  
  If no ancestor of $v$ lies in $\set{v_1,\ldots,v_m}$, then 
  in particular $r_1\not\in \set{v_1,\ldots,v_m}$ and 
  $A\in\mathcal{B}_m(r_1)$. 
  This is a contradiction with the fact that the process has terminated. 
  If $\set{v_1,\ldots,v_m}$ contains an ancestor of $v$ in $F_1$, 
  fix $z\in[m]$ such that $v_z$ is an ancestor of $v$ in $F_1$ and is closest to $v$ in $F_1$. 
  Consider the moment that the process has chosen $v_z$. We have $A\in\mathcal{B}_z(u)$, so $v_z$ was not a minimal choice for $v_z$, contradiction.

  Now assume $c\ge 2$. 
  For each $z\in[m]$, let $U_{z}$ be the set of (at most two) children of $v_z$ in $F_{(z)}$.
  Let $u\in U_z$. 
  Since $u \in V(F_{(z)})$ we know that $u\not\in\set{v_1,\ldots,v_z}$. 
  Since $u$ was not chosen to be $v_{z}$, we know that 
  $\mathcal{C}_{z}(u)$ has no independent set of size $k^{c-1}$. 
  Let $\mathcal{E}_{z}:=\bigcup_{u\in U_z}\mathcal{C}_{z}(u)$.  
  Therefore $\mathcal{E}_{z}$ does not contain an independent set of size $2k^{c-1}$ for each $z\in[m]$.

  Next, recall that $F_1$ is a tree with root $r_1$.  If $r_1 = v_m$, then define $\mathcal{E}_{m+1}:=\emptyset$. Otherwise, define $\mathcal{E}_{m+1}:=\mathcal{C}_{m+1}(r_1)$.  
  (Note that $\mathcal{C}_{m+1}(v)$ is defined for all $v\in V(F_{(z)}(r_1))$ so in particular for $v=r_1$.) 
  In the latter case, since the process for constructing $v_1,\ldots,v_m$ stopped at iteration $m$, $\mathcal{E}_{m+1}=\mathcal{C}_{m+1}(r_1)$ does not contain an independent set of size $k^{c-1}$. Therefore $\mathcal{A'}:=\bigcup_{z\in[m+1]}\mathcal{E}_z$ does not contain an independent set of size $(m+1)2k^{c-1} \le 2k^{c}$.

  Observe that $\mathcal{A}'$ is a set of non-trivial $(F_i)_{i\in\{2,\ldots,c\}}$-tuples.  Apply the inductive hypothesis with $c':=c-1$ and $k':=2k^{c}$ to the set $\mathcal{A}'$ of non-trivial $(F_i)_{i\in\{2,\ldots,c\}}$-tuples.  Since $\mathcal{A}'$ does not contain an independent set of size $k'$, this results in outcome \ref{item:hungarian-support:hitting-set_precise}.  That is, we obtain a sequence of sets $X_2,\ldots,X_c$ with $X_i\subseteq V(F_i)$ for each $i\in\{2,\ldots,c\}$ such that $\sum_{i=2}^c|X_i|\le\ell(2k^{c},c-1)$,  and, for each $(A_2,\ldots,A_c)\in\mathcal{A}'$, there exists some $i\in\{2,\ldots,c\}$ such that $V(A_i)\cap X_i\neq\emptyset$.

  We claim that $X_1,\ldots,X_c$ satisfy the conditions of outcome \ref{item:hungarian-support:hitting-set_precise}.

  First we show that, for each $(A_1,\ldots,A_c)\in\mathcal{A}$, there exists $i\in[c]$ such that $V(A_i)\cap X_i\neq\emptyset$.  If $(A_2,\ldots,A_c)\in \mathcal{A}'$ then, by the inductive hypothesis, $V(A_i)\cap X_i\neq\emptyset$ for some $i\in\{2,\ldots,c\}$.  If $(A_2,\ldots,A_c)\notin\mathcal{A}'$, then this is because there is no $z\in[m+1]$ such that $(A_2,\ldots,A_c)\in\mathcal{E}_{z}$. Let $v$ be the root of $A_1$.

  First, suppose that $v$ has no $F_1$-ancestor in $X_1$. 
  Observe that $r_1 \notin X_1$ in this case, and thus $F_{(m+1)}(r_1)$ and $\mathcal{C}_{m+1}(r_1)$ are defined. 
  Observe also that $A_1\not\subseteq F_{(m+1)}(r_1)$, which is the component of $F_{(m+1)}=F_1-X_1$ that contains $r_1$, because $(A_2, \dots, A_c)\notin\mathcal{E}_{m+1}=\mathcal{C}_{m+1}(r_1)$. 
  Since $v$ has no $F_1$-ancestor in $X_1$, $v$ is in $F_{(m+1)}(r_1)$. Since $A_1\not\subseteq F_{(m+1)}(r_1)$, it must be that $A_1$ contains a vertex in $X_1$, so $V(A_1)\cap X_1\neq\emptyset$, as required.  
  
  Next, suppose that $v$ has an $F_1$-ancestor in $X_1$, and let $v_z$ be the ancestor of $v$ in $X_1$ that is closest to $v$.  If $v=v_z$ then $V(A_1)\cap X_1\supseteq\{v\}\supsetneq\emptyset$, as required.  Otherwise $A_1$ is contained in $F_1(u)$ for some child $u$ of $v_z$.  Since $(A_2, \dots, A_c)\notin\mathcal{E}_{z}\supseteq \mathcal{C}_{z}(u)$, we have that $A_1\not\subseteq F_{(z)}(u)$. 
  Since $F_{(z)}(u)$ is a connected component of $F_1(u)-\{v_1,\ldots,v_{z-1}\}$, we deduce that $V(A_1)$ contains $v_i$ for some $i\in[z-1]$ so $V(A_1)\cap X_1\supseteq\{v_i\}\supsetneq\emptyset$.

  All that remains is to show is 
  that $|X_i|\le (\prod_{j=1}^{c-1}2^{j!})\, k^{c!}$ for each $i\in[c]$. 
  For $i=1$, we have $|X_1|=m\le k$, which satisfies our requirement since $c\ge 1$.  
  For $i\in\{2,\ldots,c\}$, recalling that $k'=2k^c$, by the inductive hypothesis we obtain 
  \[\textstyle
    |X_i|
     \le \left(\prod_{j=1}^{c-2}2^{j!}\right)(k')^{(c-1)!}
     = \left(\prod_{j=1}^{c-2}2^{j!}\right)(2k^c)^{(c-1)!}
     = \left(\prod_{j=1}^{c-1}2^{j!}\right)k^{c!} ,
  \]
  as desired.   
\end{proof}

Finally, we can complete the proof of \cref{thm:gyarfas-lehel-general} (restated here for convenience).

\hungarians*
\begin{proof}
  By adding a vertex $r$ adjacent to one vertex in each component of $F$, we may assume that $F$ is a tree rooted at $r$.  As in the proof of \cref{lem:hungarians-Fi-tuples}, we may further assume that $F$ is a binary tree.

  Let $\ell:\N^2\to\N$ be the function given by \cref{lem:hungarians-Fi-tuples}.

  The proof is by induction on $c$.  When $c=1$, $\mathcal{A}$ is a set of $(F_i)_{i\in[1]}$-tuples and the result follows, with $\ell^\star(k,1)=\ell(k,1)$, from \cref{lem:hungarians-Fi-tuples}.  Now assume that $c\ge 2$.

  Let $A\in\mathcal{A}$ and let $c'$ be the number of components of $A$. Thus $c'\in[c]$. Define $\Pi(A):=(A_1,\ldots,A_c)$ so that $A_1,\ldots,A_{c'}$ are the components of $A$ while $A_{c'+1},\ldots,A_c$ are the null graphs. (The ordering of the components $A_1,\ldots,A_{c'}$ is chosen arbitrarily.) 

  For each $\{i,j\}\in\binom{[c]}{2}$ and each $A\in\mathcal{A}$, define $A_{\{i,j\}}$ as the graph obtained from $A$ as follows.  If one or both of $\pi_i(\Pi(A))$ or $\pi_j(\Pi(A))$ is the null graph then $A_{\{i,j\}}:=A$. Otherwise $A_{\{i,j\}}$ is the graph obtained from $A$ by adding the shortest path in $F$ that has one endpoint in $\pi_i(\Pi(A))$ and the other endpoint in $\pi_j(\Pi(A))$.  In either case, $A_{\{i,j\}}$ has at most $c-1$ connected components.

  For each $\{i,j\}\in\binom{[c]}{2}$, let $\mathcal{A}_{i,j}:=\{A_{\{i,j\}}\mid A\in \mathcal{A}\}$.  Then $\mathcal{A}_{\{i,j\}}$ is a set of non-null subgraphs of $F$ each having at most $c-1$ components.  Now apply the inductive hypothesis to $\mathcal{A}_{\{i,j\}}$.  If outcome \cref{item:hungarians-pack} holds, then the $k$ pairwise vertex-disjoint members of $\mathcal{A}_{\{i,j\}}$ correspond to $k$ pairwise vertex-disjoint members of $\mathcal{A}$ and there is nothing more to do. Otherwise, outcome \cref{item:hungarians-hit} holds and we obtain a set $X_{\{i,j\}}$ with $|X_{\set{i,j}}|\leq \ell^\star(k,c-1)$ such that $X_{\{i,j\}}\cap V(A_{\{i,j\}})\neq\emptyset$ for each $A\in\mathcal{A}$.

  Let $X_0:=\bigcup_{\{i,j\}\in\binom{[c]}{2}} X_{\{i,j\}}$, let $F_0:=F-X_0$ and let $\mathcal{A}_0:=\{A\in\mathcal{A}\mid V(A)\cap X_0=\emptyset\}$.  Since $F$ is a binary tree, the number $m$ of components of $F_0$ is
  \begin{equation}
    \textstyle m \leq 1+2|X_0|\le 1+2\binom{c}{2}\,\ell^\star(k,c-1). 
  \end{equation}
  Let $F_1,\ldots,F_m$ denote the components of $F_0$. Consider some $A\in\mathcal{A}_0$ having two distinct connected components $A_i:=\pi_i(\Pi(A))$ and $A_j:=\pi_j(\Pi(A))$.  Since $V(A_{\{i,j\}})\cap X_{\{i,j\}}\neq\emptyset$ and $V(A)\cap X_{\{i,j\}}=\emptyset$, we conclude that $X_{\set{i,j}}$ contains a vertex in $V(A_{\set{i,j}})\setminus V(A)$. Therefore, $A_i$ and $A_j$ are in different components of $F_0$.  Thus, each component of $F_0$ contains at most one component of $A$ for each $A\in\mathcal{A}_0$.

  For each $A\in\mathcal{A}_0$, define $\Pi_0(A)$ as the $(F_i)_{i\in[m]}$-tuple in which $\pi_i(\Pi_0(A))$ is the component of $A$ contained in $F_i$ or the null graph if $V(A)\cap V(F_i)=\emptyset$, for each $i\in[m]$.  Since $A$ is non-null, $\Pi_0(A)$ is non-trivial. At this point, we can immediately apply \cref{lem:hungarians-Fi-tuples} with $c':=m$ to the set $\{\Pi_0(A):A\in \mathcal{A}_0\}$ of non-trivial $(F_i)_{i\in[m]}$-tuples.  If outcome \cref{item:hungarian-support:independent-set} occurs then the resulting set of $k$ independent $(F_i)_{i\in[m]}$-tuples corresponds to a set of pairwise vertex-disjoint elements of $\mathcal{A}_0\subseteq \mathcal{A}$, which satisfies the conditions of \cref{item:hungarians-pack}.  If outcome \cref{item:hungarian-support:hitting-set} occurs, then we obtain sets $X_1,\ldots,X_m$ with $X_i\subseteq V(F_i)$, for each $i\in[m]$, of total size at most $\ell(k,m)$ such that for each $A\in\mathcal{A}_0$, there exists $i\in[m]$ such that $V(A)\cap X_i\neq\emptyset$.  Then $X:=X_0\cup \bigcup_{i\in[m]}X_i$ satisfies the conditions of \cref{item:hungarians-hit} with
  \[
     \textstyle |X|\le |X_0|+\sum_{i=1}^m |X_i| \le \binom{c}{2}\,\ell^\star(k,c-1) + \ell(k,m)
     \le \binom{c}{2}\,\ell^\star(k,c-1) + \ell(k,2\binom{c}{2}\,\ell^\star(k,c-1)). 
  \]
  This proves that
  \[  \textstyle \ell^\star(k,c) \le \binom{c}{2}\,\ell^\star(k,c-1) + \ell(k,2\binom{c}{2}\,\ell^\star(k,c-1)) \]
  which is sufficient to establish \cref{thm:gyarfas-lehel-general}.  However, with a bit more work, one can prove a bound of the form $\ell^\star(k,c)\in O(k^{h(c)})$ for some $h:\N\to\N$, which is polynomial in $k$ for any fixed $c$.

  For each $I\in\binom{[m]}{c}$, define the set $\mathcal{A}_I:=\{A\in\mathcal{A}_0:A\subseteq \bigcup_{i\in I}F_i\}$.  Since each element in $A$ has at most $c$ components and $V(A)\subseteq V(F_0)$ for each $A\in \mathcal{A}_0$, each such $A$ appears in $\mathcal{A}_I$ for at least one $I\in\binom{[m]}{c}$.  For each $I\in\binom{[m]}{c}$ and each $A\in\mathcal{A}_I$, define $\Pi_I(A):=(\pi_i(\Pi_0(A))_{i\in I}$ as the $(F_i)_{i\in I}$-tuple obtained by restricting $\Pi_0(A)$ to the indices in $I$.  For each $I\in\binom{[m]}{c}$, apply \cref{lem:hungarians-Fi-tuples} to the set $\{\Pi_i(A):A\in \mathcal{A}_I\}$ of $(F_i)_{i\in I}$-tuples.  If outcome \cref{item:hungarian-support:independent-set} occurs for some $I\in\binom{[m]}{c}$ then the resulting set of $k$ independent $(F_i)_{i\in I}$-tuples corresponds to a set of pairwise vertex-disjoint elements of $\mathcal{A}_I\subseteq \mathcal{A}$, which satisfies the conditions of \cref{item:hungarians-pack}.  If outcome \cref{item:hungarian-support:hitting-set} occurs for all $I\in\binom{[m]}{c}$, then we obtain a set $X_I\subseteq V(F_0)\subseteq V(F)$ with $|X_I|\le \ell(k,c)$ such that $V(A)\cap X_I\neq\emptyset$ for each $A\in\mathcal{A}_I$.

  Define the set $X:= X_0 \cup \bigcup_{I\in\binom{[m]}{c}} X_I$. As discussed above, each $A\in \mathcal{A}\setminus \mathcal{A}_0$ contains a vertex of $X_0$ while each $A\in\mathcal{A}_0$ contains a vertex of $\bigcup_{I\in\binom{[m]}{c}} X_I$. Finally,
  \[
      |X|\le |X_0|+\bigcup_{I\in\binom{[m]}{c}}|X_I|
     \le \binom{c}{2}\,\ell^\star(k,c-1) + \binom{m}{c}\ell(k,c)
     \le \binom{c}{2}\,\ell^\star(k,c-1) + \binom{2\binom{c}{2}\ell^\star(k,c-1)}{c}\ell(k,c), 
  \]
  so~\cref{item:hungarians-hit} holds.
\end{proof}

By expanding the functions $\ell$ and $\ell^\star$ that appear in the proofs above one can see that $\ell(k,2)=\Oh(k^2)$, $\ell(k,3)=\Oh(k^6)$, 
$\ell^{\star}(k,1)=k$, 
$\ell^{\star}(k,2)=\Oh(k^4)$, $\ell^{\star}(k,3)=\Oh(k^{18})$.  For any fixed $c\ge 1$, $\ell^{\star}(k,c)=\Oh(k^{c\cdot c!})$.

\section{Discussion}

We conjecture that \cref{thm:main-in-intro} remains true with $f(k)\in\Oh(k \log k)$ as in the original Erd\H{o}s-P\'osa theorem, which would be optimal. Our proof gives $f(k)\in\Oh(k^{18}\polylog k)$. By applying the techniques in \cref{sec:hungarians} directly to the tuples $\{(\Psi_\ell(t))_{\ell\in\{0,\ldots,p\}}:\text{$t$ is an admissible tuple}\}$ that appear in the proof of \cref{thm:the-big-ball-of-wax}, the exponent $18$ could be reduced with some additional work but would still be much greater than $1$.

Turning to the function $g(d)$ in  \cref{thm:main-in-intro}, we remark that we made no attempt to minimize the leading constant in our proof, which shows $g(d)=19d$. We believe that we can decrease the constant $19$ somewhat, at the price of a more complicated proof, but it would still be above $10$. It would be interesting to determine the smallest function $g(d)$ for which \cref{thm:main-in-intro} is true (with no constraint on $f(k)$).

\section*{Acknowledgements}

The authors would like to thank Ugo Giocanti, Jędrzej Hodor, Freddie Illingworth, and Clément Legrand-Duchesne for discussions in the early stage of this research, Raj Kaul for spotting typos in an earlier version of the manuscript, and the two anonymous reviewers for their very helpful comments.  
This research was initiated at the Third Ottawa-Carleton Workshop on Graphs and Probability, held in Mont Sainte-Marie, Québec and in Toronto, Ontario, Oct 13–20, 2024. 
Partially completed at the Thirteenth Annual Workshop on Geometry and Graphs held in Holetown, Barbados, Jan 30-Feb 6, 2026.

\bibliographystyle{plainurlnat}
\bibliography{cep}

\begin{thebibliography}{6}
\providecommand{\natexlab}[1]{#1}
\providecommand{\url}[1]{\texttt{#1}}
\providecommand{\urlprefix}{URL }
\expandafter\ifx\csname urlstyle\endcsname\relax
  \providecommand{\doi}[1]{\href{https://dx.doi.org/#1}{\nolinkurl{doi:#1}}}\else
  \providecommand{\doi}[1]{\href{https://dx.doi.org/#1}{\nolinkurl{doi:#1}}}\fi
\providecommand{\eprint}[2][]{\url{#2}}

\bibitem[{Ahn et~al.(2025)Ahn, Gollin, Huynh, and Kwon}]{ahn.gollin:coarse}
Jungho Ahn, J.~Pascal Gollin, Tony Huynh, and O{-}joung Kwon.
\newblock A coarse {E}rdős-{P}ósa theorem.
\newblock In \emph{ACM-SIAM Symposium on Discrete Algorithms (SODA25)}. 2025.
\newblock To appear.

\bibitem[{Erd{\H{o}}s and P{\'o}sa(1965)}]{EP1965}
Paul Erd{\H{o}}s and Lajos P{\'o}sa.
\newblock On independent circuits contained in a graph.
\newblock \emph{Canadian Journal of Mathematics}, 17:347--352, 1965.

\bibitem[{Georgakopoulos and Papasoglu(2023)}]{GP23}
Agelos Georgakopoulos and Panos Papasoglu.
\newblock Graph minors and metric spaces, 2023.
\newblock \href{https://arxiv.org/abs/2305.07456}{arXiv:2305.07456}.

\bibitem[{Gyárfás and Lehel(1970)}]{gyarfas.lehel:helly}
András Gyárfás and Jenö Lehel.
\newblock A {H}elly-type problem in trees.
\newblock \emph{Combinatorial Theory and its applications}, 4:571--584, 1970.

\bibitem[{Robertson and Seymour(1986)}]{RS1986}
Neil Robertson and Paul~D. Seymour.
\newblock Graph minors. {V}. {E}xcluding a planar graph.
\newblock \emph{Journal of Combinatorial Theory, Series B}, 41(1):92--114,
  1986.

\bibitem[{Simonovits(1967)}]{Simonovits67}
Mikl{\'o}s Simonovits.
\newblock A new proof and generalizations of a theorem of {E}rdős and
  {P}{\'o}sa on graphs without $k+1$ independent circuits.
\newblock \emph{Acta Mathematica Academiae Scientiarum Hungarica}, 18:191--206,
  1967.

\end{thebibliography}

\end{document}